\newcommand{\pdfgraphics}{\ifpdf\DeclareGraphicsExtensions{.pdf,.jpg}\else\fi}
\definecolor{hanblue}{rgb}{0.27, 0.42, 0.81}
\definecolor{red}{rgb}{1.0, 0.0, 0.0}
\theoremstyle{plain}
\newtheorem{teo}{Theorem}[section]
\newtheorem{lemma}[teo]{Lemma}
\newtheorem{prop}[teo]{Proposition}
\newtheorem{cor}[teo]{Corollary}
\newtheorem{ackn}{Acknowledgements\!}
\newtheorem{fndg}{Funding\!}
\theoremstyle{definition}
\newtheorem{defn}[teo]{Definition}
\theoremstyle{remark}
\newtheorem{rem}[teo]{Remark}
\numberwithin{equation}{section}
\newcommand{\de}{\,\ensuremath{\mathrm d}} % Il de degli integrali (contiene lo spazio da mettere tra integranda e misura)
\renewcommand{\epsilon}{\varepsilon}
\newcommand{\N}{\ensuremath{\mathbb N}}%Natural numbers
\newcommand{\R}{\ensuremath{\mathbb R}}%%Real numbers
\newcommand{\leb}{\Ell}
\newcommand{\Ell}{\mathscr{L}}
\renewcommand*\env@matrix[1][*\c@MaxMatrixCols c]{%
  \hskip -\arraycolsep
  \let\@ifnextchar\new@ifnextchar
  \array{#1}}
\begin{document}

\pdfgraphics % Use this command right after \begin{document}

\title{Fourier transform of BV functions and applications}

\author{Thomas Beretti \footnote{International School for Advanced Studies (SISSA) - \href{thomas.beretti@sissa.it}{thomas.beretti@sissa.it}} \and Luca Gennaioli \footnote{University of Warwick, Department of Mathematics - \href{luca.gennaioli@warwick.ac.uk}{luca.gennaioli@warwick.ac.uk}}}

\date{\today}

\maketitle

\begin{abstract}
\noindent 
This paper investigates the relation between the Fourier transform of {\rm BV} (bounded variation) functions and their jump sets. We introduce the notion of $L^2$-jump product and obtain a weighted Plancherel identity for {\rm BV} functions. As a corollary, we get a newfound characterization of sets of finite perimeter in terms of their Fourier transform. Moreover, we sharpen a result of Herz on the set-theoretic derivative of the Fourier transform of characteristic functions of sets. Last, we obtain sharp bounds on the quadratic discrepancy of {\rm BV} functions, and as a consequence, we generalize the classic estimates of Beck and Montgomery.
\end{abstract}

\tableofcontents

\section{Introduction}

This work presents a novel bridge between harmonic analysis and geometric measure theory. The Fourier transform has proved to be an effective tool when studying differential equations. As well known, it allows linear differential problems to be turned into algebraic ones. Moreover, since Sobolev norms are equivalent to weighted $L^2$-norms of the Fourier transform, information on the latter can be exploited for studying the regularity of solutions of {\rm PDE}s. Another example of such duality is the relation between the Fourier restriction problem and the decay of solutions of some wave problems, such as the Schr\"odinger problem or the Klein-Gordon equations, and we refer the reader to the seminal paper of Strichartz~\cite{Str77}. \par

In Section~\ref{Fourier Symp}, we show that, for functions of bounded variation, the problem of estimating the short-time behaviour of the (relative) heat content is dual to determine the limit behaviour of averages of the Fourier transform. Since the former retrieves several quantities of geometric interest and is strictly related to perimeters, the latter will naturally inherit such a property. In general, the problem of estimating the Fourier transform has a long history, and we refer the reader to the classic work of Hlawka~\cite{Hla50}, Herz~\cite{Herz62}, and to the study in \cite{BNW88}.\par

 In what follows, $d$ stands for a positive integer, and for a positive value $R$, we write $B_R$ to denote the ball of radius $R$ centred in the origin. The Fourier transform of a function $u$ is either denoted by $\mathcal{F}\{u\}$ or $\hat u$.\par 
 We turn to the main result of Section~\ref{Fourier Symp}. Before proceeding with its statement, we introduce a special geometric quantity. In particular, the reader may find relevant geometric definitions and notation on {\rm BV} functions at the start of the same section.

\begin{defn}[$L^2$-jump product] Let $u,v\in {\rm BV}(\R^d)\cap L^\infty(\R^d)$, then we define the $L^2$-jump product of $u$ and $v$ as
\begin{equation*}
    \mathcal{J}(u,v)=\int_{J_u\cap J_v}(u^{+}-u^{-})(v^{+}-v^{-})\,\nu_{J_u}\cdot\nu_{J_v}\de\mathcal{H}^{d-1},
\end{equation*}
with the convention that $\mathcal{J}(u)=\mathcal{J}(u,u)$.
\end{defn}

We proceed with stating what we interpret as a weighted Plancherel identity.
\begin{teo}
\label{Supreme main theorem}
    Let $u,v\in{\rm BV}(\R^d)\cap L^\infty(\R^d)$, then it holds
    \begin{equation}
        \label{Fourier transform of BV functions 1}
        \lim_{R\to+\infty}\frac{2\pi^2}{R}\int_{B_R}|\xi|^2\hat{u}(\xi)\overline{\hat{v}}(\xi)\de\xi = \mathcal{J}(u,v).
    \end{equation}
   In particular, if $u=v$, we get 
    \begin{equation}
        \label{Fourier transform of BV functions 2}
        \lim_{R\to+\infty}\frac{2\pi^2}{R}\int_{B_R}|\xi|^2|\hat{u}|^2(\xi)\de\xi = \mathcal{J}(u).
    \end{equation}
\end{teo}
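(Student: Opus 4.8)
The plan is to move the identity from the Fourier side to the heat semigroup, where it becomes a short-time heat-content asymptotic, and then come back via a Tauberian theorem. First I would reduce. Fixing the convention $\hat u(\xi)=\int_{\R^d}u(x)e^{-2\pi i x\cdot\xi}\de x$, note that for $u\in{\rm BV}(\R^d)\cap L^\infty(\R^d)$ the measure $Du$ is finite and $\R^d$-valued, $\widehat{D_ju}(\xi)=2\pi i\xi_j\hat u(\xi)$, and hence $|\widehat{Du}(\xi)|^2:=\sum_j|\widehat{D_ju}(\xi)|^2=4\pi^2|\xi|^2|\hat u(\xi)|^2$ for $\xi\neq0$. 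Thus the left-hand side of \eqref{Fourier transform of BV functions 1} equals $\tfrac1{2R}\int_{B_R}\widehat{Du}(\xi)\cdot\overline{\widehat{Dv}(\xi)}\de\xi$, which is real since $Du,Dv$ are real measures. Both this quantity and $\mathcal J$ are symmetric in $(u,v)$; the bilinearity of $\mathcal J$, not apparent from the definition, follows from the linearity of the jump part $w\mapsto D^jw$ on ${\rm BV}$: choosing a countably $(d-1)$-rectifiable Borel set $\Gamma\supset J_u\cup J_v$ of $\sigma$-finite $\mathcal H^{d-1}$-measure and fixing an orientation $\nu_\Gamma$, one writes $D^jw=g_w\,\nu_\Gamma\,\mathcal H^{d-1}\llcorner\Gamma$ with $g_w^2\le 2\|w\|_\infty|w^+-w^-|\in L^1(\mathcal H^{d-1}\llcorner\Gamma)$, checks that $\mathcal J(u,v)=\int_\Gamma g_ug_v\de\mathcal H^{d-1}$, and reads off $\mathcal J(u+v)-\mathcal J(u-v)=4\,\mathcal J(u,v)$. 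By polarization it therefore suffices to treat $u=v$, i.e. to show that the nonnegative Radon measure $\mu:=|\widehat{Du}|^2\,\mathcal L^d$ satisfies $\mu(B_R)\sim 2R\,\mathcal J(u)$ as $R\to+\infty$.

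Next I would introduce the heat-content functional $\psi(\tau):=\langle u,u-e^{\tau\Delta}u\rangle_{L^2}=\int_{\R^d}(1-e^{-4\pi^2\tau|\xi|^2})|\hat u|^2\de\xi=\tfrac12\iint_{\R^d\times\R^d}p_\tau(x-y)\,(u(x)-u(y))^2\,\de x\,\de y$, where $p_\tau(x)=(4\pi\tau)^{-d/2}e^{-|x|^2/4\tau}$. Then $\psi$ is nondecreasing and concave on $(0,+\infty)$ — indeed $\psi''(\tau)=-16\pi^4\int|\xi|^4e^{-4\pi^2\tau|\xi|^2}|\hat u|^2\de\xi\le0$ — and $\psi'(\tau)=\int_{\R^d}e^{-4\pi^2\tau|\xi|^2}\de\mu(\xi)$; Parseval applied to $p_\tau*D_ju=\partial_j(p_\tau*u)\in L^1\cap L^\infty$ paired against $D_ju$ also identifies this with $\iint p_\tau(x-y)\,\de Du(x)\cdot\de Du(y)=\|\nabla e^{(\tau/2)\Delta}u\|_{L^2}^2$, which is finite for every $\tau>0$. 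The analytic heart of the proof is the short-time asymptotic $\psi(\tau)\sim\sqrt{\tau/\pi}\,\mathcal J(u)$ as $\tau\to0^+$. Granting it, concavity upgrades it to a statement about $\psi'$: since $\psi'$ is nonincreasing and $\psi(c\tau)\sim\sqrt c\,\sqrt{\tau/\pi}\,\mathcal J(u)$ for each $c>0$, squeezing $\psi'(\tau)$ between the difference quotients $\tfrac{\psi(b\tau)-\psi(\tau)}{(b-1)\tau}$ and $\tfrac{\psi(\tau)-\psi(a\tau)}{(1-a)\tau}$ and letting $a\uparrow1$, $b\downarrow1$ gives $\psi'(\tau)\sim\tfrac1{2\sqrt{\pi\tau}}\,\mathcal J(u)$. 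This passage, together with the Tauberian step below, is precisely the announced duality between the heat content and averages of the Fourier transform.

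To conclude, let $\nu$ be the push-forward of $\mu$ under $\xi\mapsto|\xi|^2$, so that $\nu([0,T])=\mu(B_{\sqrt T})$ and $\int_0^\infty e^{-\lambda s}\de\nu(s)=\psi'\!\big(\lambda/4\pi^2\big)\sim\sqrt\pi\,\mathcal J(u)\,\lambda^{-1/2}$ as $\lambda\to0^+$. Karamata's Tauberian theorem for nonnegative measures then yields $\nu([0,T])\sim\tfrac{\sqrt\pi\,\mathcal J(u)}{\Gamma(3/2)}\,T^{1/2}=2\mathcal J(u)\,T^{1/2}$, that is $\mu(B_R)\sim 2\mathcal J(u)\,R$; combined with the reduction of the first paragraph this establishes \eqref{Fourier transform of BV functions 1}, and \eqref{Fourier transform of BV functions 2} is the special case $u=v$. (The degenerate case $\mathcal J(u)=0$ can be treated directly, bypassing Karamata, via $\nu([0,T])\le e\,\psi'\big(1/(4\pi^2T)\big)=o(\sqrt T)$.)

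The one genuinely hard ingredient is the heat-content asymptotic $\psi(\tau)\sim\sqrt{\tau/\pi}\,\mathcal J(u)$, equivalently $\tfrac1{\sqrt\tau}\iint p_\tau(x-y)(u(x)-u(y))^2\de x\de y\to\tfrac2{\sqrt\pi}\mathcal J(u)$. The leading contribution comes from pairs $x,y$ at distance $O(\sqrt\tau)$ that are separated by the jump set $J_u$: on this scale $p_\tau$ concentrates near the diagonal, and countable $(d-1)$-rectifiability of $J_u$ together with approximate continuity of the traces $u^\pm$ along $J_u$ lets the kernel "see" each portion of $J_u$ as a flat hyperplane across which $u$ has a fixed jump; using the straddle weight $\int_0^\infty v\,(4\pi\tau)^{-1/2}e^{-v^2/4\tau}\de v=\sqrt{\tau/\pi}$ together with the integrable majorant $(u^+-u^-)^2\le 2\|u\|_\infty|u^+-u^-|\in L^1(\mathcal H^{d-1}\llcorner J_u)$, a blow-up and dominated-convergence argument should produce $\iint p_\tau(x-y)(u(x)-u(y))^2\de x\de y\sim 2\sqrt{\tau/\pi}\int_{J_u}(u^+-u^-)^2\de\mathcal H^{d-1}$ from the jump set alone. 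The main technical obstacle is to show that all other pairs contribute only $o(\sqrt\tau)$: off $J_u$ the precise representative of $u$ is approximately continuous $\mathcal H^{d-1}$-a.e., so the increments $u(x)-u(y)$ vanish with $|x-y|$ exactly where the absolutely continuous and Cantor parts of $Du$ are concentrated, but making this quantitative — using $\nabla u\in L^1$ (and absolute continuity of the integral) to beat the $O(\sqrt\tau)$ rate carried by $|Du|$ for the absolutely continuous part, and the $\mathcal H^{d-1}$-diffuseness of the Cantor part for $D^cu$ — is delicate. This is precisely the short-time heat-content analysis of Section~\ref{Fourier Symp}, in the spirit of the classical heat-content estimates for sets of finite perimeter; it is where essentially all the geometric measure theory of the proof is concentrated, and everything else is soft.
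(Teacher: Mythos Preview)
Your proposal is correct and follows the same overall architecture as the paper (heat semigroup + Plancherel + Tauberian theorem + polarization), but the route to the derivative asymptotic is genuinely different and worth flagging. The paper explicitly notes (Remark~2.7) that the MPPP heat-content result, Theorem~\ref{Differentiability Pallara}, only gives right-differentiability of $H$ at $0$, not $\lim_{t\to 0^+}H'(t)$; it then closes this gap via a new technical result, Proposition~\ref{Technical proposition}, which computes $\nabla(T(t^2)u)$ directly and splits the pairing against $Dv$ over $S_u^{\mathsf c}$ and $J_u$, handling each piece by dominated convergence. You instead observe that $\psi$ is concave on $(0,\infty)$ and squeeze $\psi'(\tau)$ between the secant slopes $\tfrac{\psi(b\tau)-\psi(\tau)}{(b-1)\tau}$ and $\tfrac{\psi(\tau)-\psi(a\tau)}{(1-a)\tau}$; since $\psi(c\tau)\sim\sqrt c\,\sqrt{\tau/\pi}\,\mathcal J(u)$ for every $c>0$, sending $a\uparrow 1$, $b\downarrow 1$ yields $\psi'(\tau)\sim\tfrac{1}{2\sqrt{\pi\tau}}\mathcal J(u)$ with no further work. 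This is a clean monotone-derivative/Tauberian trick that lets you run the entire argument on the black-box MPPP asymptotic alone, bypassing Proposition~\ref{Technical proposition}. What the paper's approach buys in exchange is a self-contained computation that also delivers the off-diagonal limit $\lim_{t\to0^+}H'(t)=-\mathcal J(u,v)/\sqrt\pi$ directly (so polarization is in principle optional there), whereas your concavity argument is specific to the diagonal and makes the polarization step mandatory. Both are valid; yours is the more economical packaging.
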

We make a few comments on the latter theorem. By classic properties of the Fourier transform, we can rewrite \eqref{Fourier transform of BV functions 2} as
\begin{equation}
    \label{Averaged Plancherel for measures}
     \lim_{R\to+\infty}\frac{1}{2R}\int_{B_R}|\mathcal{F}\{Du\}|^2(\xi)\de\xi = \mathcal{J}(u),
\end{equation}
where $Du$ stands for the distributional gradient of $u$. As a well-known property, the distributional gradient of a {\rm BV} function decomposes in an absolutely continuous part, a Cantor part, and a jump part. In \eqref{Averaged Plancherel for measures}, the jump part alone plays a role in the limit, and we give the following heuristic interpretation. The Fourier transform tells how influential a certain oscillation is, and when looking at increasingly high frequencies, one expects to detect faster oscillations. Since the oscillation frequency is “infinity” at the jump set of a function, then the jump part of the gradient is the only one caught up in the limit. \par
We also mention that we recover Wiener's lemma by considering $d=1$ in \eqref{Averaged Plancherel for measures}. Indeed, every finite measure in $\mathbb{R}$ is also the gradient of a (bounded) function of bounded variation. \par
Moreover, it is worth comparing Theorem~\ref{Supreme main theorem} with the results in \cite{Str90}, where analogous estimates for real-valued measures are obtained under some additional hypotheses, such as requiring a uniform control on the measure of balls. In particular, the techniques employed differ from ours, and we indirectly address the question of removing the regularity assumptions in the case of measures that are also distributional gradients. For completeness, we also mention the study in \cite{AgHo76} concerning measures supported on $\mathcal{C}^1$ submanifolds of $\R^d$, where the authors obtain similar estimates.\par

Integrating by parts \eqref{Fourier transform of BV functions 2}, we immediately deduce this first corollary to Theorem~\ref{Supreme main theorem}.
\begin{cor}
\label{Error in Plancherel corollary}
Let $u\in {\rm BV}(\R^d)\cap L^\infty(\R^d)$, then
    \begin{equation}
        \label{Intro result 2}
        \lim_{R\to+\infty}2\pi^2R\int_{B_R^\mathsf{c}}|\hat{u}|^2(\xi)\de\xi=\mathcal{J}(u).
    \end{equation}
\end{cor}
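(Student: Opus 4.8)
The plan is to read off the corollary from identity~\eqref{Fourier transform of BV functions 2} by a one-dimensional manipulation, using the elementary identity $|\xi|^{-2}=\int_{|\xi|}^{\infty}2s^{-3}\de s$, valid for $\xi\neq 0$, in place of a literal integration by parts. Since $u\in{\rm BV}(\R^d)\cap L^\infty(\R^d)\subset L^1(\R^d)\cap L^\infty(\R^d)\subset L^2(\R^d)$, Plancherel gives $\hat u\in L^2(\R^d)$. Set
\[
F(R)\eqdef\int_{B_R}|\xi|^2|\hat u|^2(\xi)\de\xi,\qquad G(R)\eqdef\int_{B_R^{\mathsf c}}|\hat u|^2(\xi)\de\xi ;
\]
both are finite for every $R>0$ (for the first, $F(R)\le R^2\|\hat u\|_{L^2}^2$), and $F$ is nondecreasing. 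By \eqref{Fourier transform of BV functions 2}, $F(R)/R\to L\eqdef\mathcal{J}(u)/(2\pi^2)$.

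First I would express $G$ through $F$. Writing $G(R)=\int_{B_R^{\mathsf c}}|\xi|^{-2}\,\bigl(|\xi|^2|\hat u|^2(\xi)\bigr)\de\xi$, inserting $|\xi|^{-2}=\int_{|\xi|}^{\infty}2s^{-3}\de s$, and applying Fubini (all integrands being nonnegative), the domain $\{|\xi|\ge R,\ s\ge|\xi|\}$ rewrites as $\{s\ge R,\ R\le|\xi|\le s\}$, so that
\[
G(R)=\int_R^{\infty}\frac{2}{s^3}\bigl(F(s)-F(R)\bigr)\de s=\int_R^{\infty}\frac{2}{s^3}F(s)\de s-\frac{F(R)}{R^2},
\]
where I used $\int_R^{\infty}2s^{-3}\de s=R^{-2}$. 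Multiplying by $R$ and substituting $s=R\sigma$ in the first integral gives
\[
R\,G(R)=\int_1^{\infty}\frac{2}{\sigma^3}\cdot\frac{F(R\sigma)}{R}\de\sigma-\frac{F(R)}{R}.
\]

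Next I would let $R\to+\infty$. The subtracted term tends to $L$. For the integral term, being convergent the map $t\mapsto F(t)/t$ is bounded by some $M$ on $[t_0,\infty)$, so for $R\ge t_0$ and $\sigma\ge 1$ one has $F(R\sigma)/R=\sigma\,F(R\sigma)/(R\sigma)\le M\sigma$, whence the integrand is dominated by $2M\sigma^{-2}\in L^1(1,\infty)$; since $F(R\sigma)/R\to\sigma L$ pointwise, dominated convergence yields $\int_1^{\infty}2\sigma^{-3}\,(F(R\sigma)/R)\de\sigma\to\int_1^{\infty}2L\sigma^{-2}\de\sigma=2L$. Therefore $R\,G(R)\to 2L-L=L$, i.e.\ $2\pi^2 R\,G(R)\to 2\pi^2 L=\mathcal{J}(u)$, which is \eqref{Intro result 2}.

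Since the argument is elementary, there is no serious obstacle; the only points needing care are the finiteness/integrability bookkeeping (using $\hat u\in L^2$ so that $F(s)/s$ is eventually bounded and the Fubini exchange is legitimate) and the uniform domination near $\sigma=\infty$ for the last passage to the limit. Equivalently, one may phrase the middle display as an integration by parts against the absolutely continuous measure $\de F(s)=\bigl(\int_{\partial B_s}|\hat u|^2\de\mathcal{H}^{d-1}\bigr)\de s$, which is the ``integrating by parts'' referred to in the statement.
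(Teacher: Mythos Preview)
Your proof is correct and is essentially the same as the paper's: the identity $G(R)=2\int_R^\infty s^{-3}F(s)\de s-F(R)/R^2$ that you obtain via Fubini is exactly the integration-by-parts identity the paper writes (with the roles of $F$ and the weight $s^{-3}$ swapped), and both then conclude by passing to the limit using $F(s)/s\to L$. Your dominated-convergence justification of the last step is in fact a bit more explicit than the paper's, which simply asserts the asymptotic $\int_L^\infty R^{-3}F(R)\de R\sim \mathcal{J}(u)/(2\pi^2 L)$.
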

The latter equality is significant for at least two reasons, the first being that it quantifies the error committed by cutting off the higher frequencies in Plancherel identity. Namely, we obtain
\begin{equation*}
    \int_{\R^d}|{u}|^2(\xi)\de\xi=\int_{B_R}|\hat{u}|^2(\xi)\de\xi+\frac{\mathcal{J}(u)}{2\pi^2 R}+o(R^{-1})\quad\text{as}\quad R\to+\infty.
\end{equation*}
Moreover, by letting $\Omega\subset\mathbb{R}^d$ be a set of finite measure and perimeter, and by setting $u=\mathds{1}_{\Omega}$ in equation \eqref{Intro result 2}, we get
\begin{equation*}
\lim_{R\to+\infty}2\pi^2R\int_{B_R^\mathsf{c}}|\widehat{\mathds{1}}_\Omega|^2(\xi)\de\xi={\rm Per}(\Omega),
\end{equation*}
which extends the estimates in \cite[Ch.~6]{Mon94} to any dimension and for any set of finite perimeter.\par
Dealing with characteristic functions of sets, we further obtain this second corollary to Theorem~\ref{Supreme main theorem}. Again, we refer to the preliminary part of Section~\ref{Fourier Symp} for the notion of reduced boundary.
\begin{cor}
\label{Main corollary 1}
    Let $\Omega_1$ and $\Omega_2$ be sets of finite measure and perimeter of $\mathbb{R}^d$, then it holds
    \begin{equation}
        \label{Intro result mixed}
        \lim_{R\to+\infty}\frac{2\pi^2}{R}\int_{B_R}|\xi|^2\widehat{\mathds{1}}_{\Omega_1}(\xi)\overline{\widehat{\mathds{1}}}_{\Omega_2}(\xi)\de\xi = \underset{\partial^{*}\Omega_1\cap\partial^{*}\Omega_2}{\int}\nu_{\partial^{*}\Omega_1}\cdot\nu_{\partial^{*}\Omega_2}\de\mathcal{H}^{d-1}.
    \end{equation}
    In particular, if $\Omega_1=\Omega_2=\Omega$, we get
    \begin{equation}
    \label{Intro Result}
        \lim_{R\to+\infty}\frac{2\pi^2}{R}\int_{B_R}|\xi|^2|\widehat{\mathds{1}}_\Omega|^2(\xi)\de\xi = {\rm Per}(\Omega).
    \end{equation}
    Finally, if for a set $\Omega\subset\mathbb{R}^d$ of finite measure it holds
    \begin{equation}
        \label{Charact. of sets of finite perimeter}
         \limsup_{R\to+\infty}\frac{1}{R}\int_{B_R}|\xi|^2|\widehat{\mathds{1}}_\Omega|^2(\xi)\de\xi<+\infty,
    \end{equation}
    then $\Omega$ is of finite perimeter.
\end{cor}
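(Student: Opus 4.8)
The plan is to read the first two identities off Theorem~\ref{Supreme main theorem} and to reduce the characterization~\eqref{Charact. of sets of finite perimeter} to a classical difference-quotient criterion for {\rm BV}, via a Fourier split at the scale $1/|h|$.

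For \eqref{Intro result mixed} and \eqref{Intro Result}: since $\Omega_1,\Omega_2$ have finite measure and perimeter, $\mathds{1}_{\Omega_1},\mathds{1}_{\Omega_2}\in{\rm BV}(\R^d)\cap L^\infty(\R^d)$, so Theorem~\ref{Supreme main theorem} applies with $u=\mathds{1}_{\Omega_1}$, $v=\mathds{1}_{\Omega_2}$. It then remains to identify $\mathcal{J}(\mathds{1}_{\Omega_1},\mathds{1}_{\Omega_2})$ with the right-hand side, which is a routine use of the structure of {\rm BV} characteristic functions: up to $\mathcal{H}^{d-1}$-negligible sets $J_{\mathds{1}_{\Omega_i}}=\partial^{*}\Omega_i$, the jump amplitude $(\mathds{1}_{\Omega_i})^{+}-(\mathds{1}_{\Omega_i})^{-}$ equals $1$ with the orientation for which $\nu_{J_{\mathds{1}_{\Omega_i}}}=\nu_{\partial^{*}\Omega_i}$, and $\mathcal{H}^{d-1}(\partial^{*}\Omega_i)={\rm Per}(\Omega_i)$; substituting into the definition of $\mathcal{J}$ gives \eqref{Intro result mixed}, and the case $\Omega_1=\Omega_2=\Omega$ gives \eqref{Intro Result}.

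For the characterization, set $u=\mathds{1}_\Omega$, which lies in $L^1(\R^d)\cap L^\infty(\R^d)\subset L^2(\R^d)$ since $|\Omega|<+\infty$, so Plancherel is available. Write $F(R)=\int_{B_R}|\xi|^2|\hat u(\xi)|^2\de\xi$; by \eqref{Charact. of sets of finite perimeter} there are $R_0,C>0$ with $F(R)\le CR$ for all $R\ge R_0$, while $F$ is bounded on $[0,R_0]$. For $h\in\R^d$, Plancherel together with $|e^{-2\pi i h\cdot\xi}-1|^2=4\sin^2(\pi h\cdot\xi)$ gives
\begin{equation*}
\|u(\cdot+h)-u\|_{L^2}^2=\int_{\R^d}4\sin^2(\pi h\cdot\xi)\,|\hat u(\xi)|^2\de\xi .
\end{equation*}
I split the integral at $|\xi|=1/|h|$. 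On $\{|\xi|\le 1/|h|\}$ the bound $\sin^2(\pi h\cdot\xi)\le\pi^2|h|^2|\xi|^2$ produces a contribution at most $4\pi^2|h|^2\,F(1/|h|)\le 4\pi^2 C|h|$ once $|h|\le 1/R_0$. On $\{|\xi|>1/|h|\}$ I decompose dyadically into the annuli $\{2^k/|h|<|\xi|\le 2^{k+1}/|h|\}$, $k\ge0$, using $|\hat u(\xi)|^2\le(2^k/|h|)^{-2}|\xi|^2|\hat u(\xi)|^2$ there and $F(2^{k+1}/|h|)\le C\,2^{k+1}/|h|$, so this part is bounded by a geometric series summing to $O(|h|)$. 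Hence $\|u(\cdot+h)-u\|_{L^2}^2\le C'|h|$ for all small $|h|$, and for $|h|$ bounded away from $0$ the same bound (with a different constant) is trivial since $\|u(\cdot+h)-u\|_{L^2}^2\le 4|\Omega|$.

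Finally, because $u$ takes only the values $0$ and $1$, the integrand $|u(x+h)-u(x)|$ is itself $\{0,1\}$-valued, hence $\|u(\cdot+h)-u\|_{L^1}=\|u(\cdot+h)-u\|_{L^2}^2$. Therefore $\sup_{0<|h|\le 1}|h|^{-1}\|u(\cdot+h)-u\|_{L^1}<+\infty$, and the classical characterization of {\rm BV} functions by uniformly bounded $L^1$-difference quotients yields $u=\mathds{1}_\Omega\in{\rm BV}(\R^d)$, i.e.\ $\Omega$ has finite perimeter. The only non-routine ingredient is the frequency-splitting estimate of the previous paragraph; everything else is either a direct application of Theorem~\ref{Supreme main theorem} or a standard fact.
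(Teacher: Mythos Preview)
Your proof is correct. For \eqref{Intro result mixed} and \eqref{Intro Result} you do exactly what the paper does: apply Theorem~\ref{Supreme main theorem} and identify the jump data of $\mathds{1}_{\Omega_i}$ with the reduced boundary.

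For the characterization \eqref{Charact. of sets of finite perimeter}, however, your route is genuinely different from the paper's. The paper stays within the heat-semigroup framework set up earlier: it feeds the hypothesis into the one-sided Tauberian theorem (Theorem~\ref{limsup Abelian Tauberian}) to obtain $\limsup_{t\to 0^+}|H'(t)|<\infty$ for $H(t)=\langle T(t^2)\mathds{1}_\Omega,\mathds{1}_\Omega\rangle$, then uses the mean value theorem and monotonicity of $H$ to bound $\limsup_{t\to 0^+}t^{-1}\int_{\Omega^{\mathsf c}}T(t^2)\mathds{1}_\Omega$, and concludes via the heat-content criterion of Miranda--Pallara--Paronetto--Preunkert (Theorem~\ref{Pallara heat implies finite perimeter}). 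Your argument bypasses both the Tauberian step and the heat-content theory: a Plancherel computation plus a low/high frequency split at scale $1/|h|$ (with dyadic control on the tail) gives $\|\mathds{1}_\Omega(\cdot+h)-\mathds{1}_\Omega\|_{L^2}^2\le C|h|$, and the observation $\|\mathds{1}_\Omega(\cdot+h)-\mathds{1}_\Omega\|_{L^1}=\|\mathds{1}_\Omega(\cdot+h)-\mathds{1}_\Omega\|_{L^2}^2$ for $\{0,1\}$-valued functions reduces matters to the classical translation criterion for {\rm BV}. Your approach is more self-contained and elementary (no Tauberian theorem, no external heat-content result), while the paper's approach has the virtue of reusing exactly the machinery already developed in Section~\ref{Fourier Symp} and making the duality between Fourier averages and heat content explicit.
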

The latter is a newfound characterization of sets of finite perimeter that does not involve distributional calculus and derivatives. Additionally, \eqref{Intro Result} improves on the estimates in \cite[Thm.~2]{Herz62}, where an upper bound for convex sets is obtained.\par

In Section~\ref{Fourier Sets}, again regarding sets of finite perimeter, we address another question of geometric measure theory and harmonic analysis. Namely, we characterize the set-theoretic derivative of the Fourier transform of sets of finite perimeter. The following result improves on \cite[Thm.~1]{Herz62}, in which the author only considered convex sets.
\begin{teo}
\label{Main theorem 2}
    Let $\Omega\subset\R^d$ be a set of finite measure and perimeter, then
    \begin{equation}
    \label{Derivative Fourier transform}
        \lim_{h\to 0}\frac{\widehat{\mathds{1}}_{\Omega+hB_1}(\xi)-\widehat{\mathds{1}}_\Omega(\xi)}{h}=\mathcal{F}\{{|D\mathds{1}_\Omega|}\}(\xi)\quad\forall\xi\in\R^d
    \end{equation}
    if and only if ${\rm Per}(\Omega)=\mathcal{S}\mathcal{M}(\Omega)$.
\end{teo}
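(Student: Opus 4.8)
The plan is to recast both sides of \eqref{Derivative Fourier transform} as Fourier transforms of measures, and then to reduce the statement to a weak-$*$ convergence of the ``outer collars'' of $\Omega$. For $h>0$ put $A_h:=(\Omega+hB_1)\setminus\Omega$, so that $\mathds{1}_{\Omega+hB_1}-\mathds{1}_\Omega=\mathds{1}_{A_h}$ (using $0\in B_1$), and let $\mu_h:=\tfrac1h\,\mathcal{L}^d\llcorner A_h\ge 0$. Then
\begin{equation*}
\frac{\widehat{\mathds{1}}_{\Omega+hB_1}(\xi)-\widehat{\mathds{1}}_\Omega(\xi)}{h}=\int_{\R^d}e^{-2\pi i\,\xi\cdot x}\de\mu_h(x),\qquad \mathcal{F}\{|D\mathds{1}_\Omega|\}(\xi)=\int_{\R^d}e^{-2\pi i\,\xi\cdot x}\de|D\mathds{1}_\Omega|(x),
\end{equation*}
recalling that $|D\mathds{1}_\Omega|=\mathcal{H}^{d-1}\llcorner\partial^*\Omega$ has total mass ${\rm Per}(\Omega)$. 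Hence \eqref{Derivative Fourier transform} is equivalent to $\int e^{-2\pi i\,\xi\cdot x}\de\mu_h\to\int e^{-2\pi i\,\xi\cdot x}\de|D\mathds{1}_\Omega|$ for every $\xi$ as $h\to0^+$. The ``only if'' direction then drops out at once by testing at $\xi=0$: there the left side is $\mu_h(\R^d)=h^{-1}|A_h|$ and the right side is $|D\mathds{1}_\Omega|(\R^d)={\rm Per}(\Omega)$, so \eqref{Derivative Fourier transform} forces $h^{-1}|A_h|\to{\rm Per}(\Omega)$, that is $\mathcal{S}\mathcal{M}(\Omega)={\rm Per}(\Omega)$.

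For the converse, assume $\mathcal{S}\mathcal{M}(\Omega)={\rm Per}(\Omega)$, which amounts to $\mu_h(\R^d)=h^{-1}|A_h|\to{\rm Per}(\Omega)<\infty$. The whole statement will follow once I establish that $\mu_h\rightharpoonup|D\mathds{1}_\Omega|$ weakly-$*$ and that $(\mu_h)_h$ is tight, since tightness allows the bounded continuous functions $x\mapsto e^{-2\pi i\,\xi\cdot x}$ to be tested against $\mu_h$, which is exactly the reformulated claim. Observe first that $h^{-1}|A_h|\to{\rm Per}(\Omega)<\infty$ forces $|A_h|\to0$, hence $\mathcal{L}^d(\overline{\Omega}\setminus\Omega)=0$; thus $A_h$ coincides, up to an $\mathcal{L}^d$-null set, with the metric collar $\{0<{\rm dist}(\cdot,\Omega)<h\}$, and moreover $|\Omega+hB_1|<\infty$ for small $h$, so all Fourier transforms in sight are those of $L^1$ functions. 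Weak-$*$ convergence will rest on the \emph{mass normalization} just recalled, $\mu_h(\R^d)\to{\rm Per}(\Omega)=|D\mathds{1}_\Omega|(\R^d)$, together with a \emph{localized lower bound}
\begin{equation*}
\liminf_{h\to0^+}\mu_h(G)\ \ge\ {\rm Per}(\Omega;G)=|D\mathds{1}_\Omega|(G)\qquad\text{for every open }G\subseteq\R^d .
\end{equation*}

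To prove the lower bound, I would apply the coarea formula to the $1$-Lipschitz function $d(x):={\rm dist}(x,\Omega)$, which has $|\nabla d|=1$ a.e.\ on $\{d>0\}$:
\begin{equation*}
\mu_h(G)=\frac1h\int_{\{0<d<h\}\cap G}|\nabla d|\de x=\frac1h\int_0^h\mathcal{H}^{d-1}\!\big(d^{-1}(t)\cap G\big)\de t\ \ge\ \frac1h\int_0^h{\rm Per}(\Omega+tB_1;G)\de t,
\end{equation*}
where the inequality uses $\partial^*(\Omega+tB_1)\subseteq\partial\{d<t\}\subseteq d^{-1}(t)$. Since $|A_t|\to0$ gives $\mathds{1}_{\Omega+tB_1}\to\mathds{1}_\Omega$ in $L^1_{\rm loc}$, lower semicontinuity of the perimeter yields $\liminf_{t\to0^+}{\rm Per}(\Omega+tB_1;G)\ge{\rm Per}(\Omega;G)$, and this $\liminf$ lower bound survives averaging in $t$; alternatively the bound follows from known localized outer Minkowski content estimates.

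Finally, a soft measure-theoretic fact converts the two ingredients into weak-$*$ convergence: a family of nonnegative measures whose total masses converge to the finite total mass of a measure $\mu$ and which satisfies $\liminf_h\mu_h(G)\ge\mu(G)$ on every open $G$ is tight (apply the bound to large balls) and converges weakly-$*$ to $\mu$; indeed any subsequential weak-$*$ limit $\nu$ has $\nu(\R^d)=\mu(\R^d)$ by tightness, and applying the lower bound to $G_\delta=\{x\in U:{\rm dist}(x,\partial U)>\delta\}$, letting $\delta\to0$ and invoking the Portmanteau inequalities, gives $\nu(U)\ge\mu(U)$ for all open $U$, whence $\nu=\mu$. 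I expect the localized lower bound to be the main obstacle: it is the genuine geometric input, and it is exactly there that the hypothesis $\mathcal{S}\mathcal{M}(\Omega)={\rm Per}(\Omega)$ is used, furnishing $|A_h|\to0$ (equivalently $\mathcal{L}^d(\overline{\Omega}\setminus\Omega)=0$) so that $A_h$ is genuinely the metric collar of $\Omega$ and the collapse of $\mu_h(\R^d)$ to exactly ${\rm Per}(\Omega)$ can pin down the weak limit; the rest — the coarea identity, the lower semicontinuity step, the measure-theoretic upgrade, and the passage from $C_0$- to $C_b$-testing afforded by tightness — should be routine.
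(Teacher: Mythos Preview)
Your proposal is correct and follows essentially the same route as the paper. The paper isolates the key step --- that $\mathcal{SM}(\Omega)={\rm Per}(\Omega)$ is equivalent to the weak convergence (in duality with $\mathcal{C}_b$) of $\mu_h$ to $|D\mathds{1}_\Omega|$ --- as a separate proposition (a refinement of a result of Ambrosio--Colesanti--Villa), proved via the same coarea/lower-semicontinuity/Portmanteau argument you sketch; the theorem then reduces to testing against $e^{-2\pi i\xi\cdot x}\in\mathcal{C}_b$ and, for the converse, at $\xi=0$.
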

The hypotheses under which we obtain \eqref{Derivative Fourier transform} are the weakest possible, and as an example, any set with Lipschitz boundary satisfies our hypotheses, as the reader may verify in \cite{ACV08}. In addition, our proof appears more straightforward than the one in \cite[Thm.~1]{Herz62} since it relies on classic functional analysis techniques and geometric measure theory.

In Section~\ref{Fourier Discrepancy}, we show how the asymptotic relation in \eqref{Fourier transform of BV functions 2} is key for improving on a major result in the theory of irregularities of distribution, also known as discrepancy theory. In what follows, we assume the dimension $d$ to be greater than or equal to $2$.\par 
For a set $\Omega\subset\mathbb{T}^d$ and for a set of $N$ points $\mathcal{P}_N\subset\mathbb{T}^d$, the discrepancy of $\mathcal{P}_N$ with respect to $\Omega$ refers to the quantity
\begin{equation*}
    \mathcal{D}(\Omega;\mathcal{P}_N)=\sum_{{p}\in\mathcal{P}_N}\mathds{1}_{\Omega}({p})-N|\Omega|.
\end{equation*}
The field had a pioneering advancement due to the seminal paper of Roth~\cite{Roth54}, where a new geometric point of view was introduced, leading to a broader employment of harmonic analysis. A later significant development followed by the work of Beck~\cite{Beck87} concerning discrepancy over affine transformations, and it turned out that such a problem is strictly related to estimating Fourier transforms. Before proceeding with our contribution, we need to introduce adequate notation.\par
Consider a real function $u\in L^1(\mathbb{R}^d)$, and let $\tau\in\mathbb{R}^d$ be a translation factor, let $\delta\geq0$ be a dilation factor, and let $\rho\colon\mathbb{R}^d\to\mathbb{R}^d$ be a rotation; in particular, we identify ${\rm SO}(d)$ with the set of all rotations. We define the affine transformation
\begin{equation*}
    [\tau,\delta,\rho]u({x})=u\left(\tau+\rho{x}/\delta\right).
\end{equation*}
Moreover, we consider the periodization functional ${\mathfrak{P}}\colon L^1(\mathbb{R}^d)\to L^1(\mathbb{T}^d)$ defined in the sense that
\begin{equation*}
    {\mathfrak{P}}\{u\}({x})=\sum_{{n}\in\mathbb{Z}^d}u({x}+{n}).
\end{equation*}
Then, for a function $u\in L^1(\mathbb{R}^d)$ and for a set of $N$ points $\mathcal{P}_N\subset\mathbb{T}^d$, we extend the previous notion of discrepancy by defining
\begin{equation}\label{Dis1}
    \mathcal{D}(u;\mathcal{P}_N)=\sum_{{p}\in\mathcal{P}_N}{\mathfrak{P}}\{u\}({p})-N\hat{u}(\mathbf{0}).
\end{equation}
In this setting, we define the quadratic discrepancy over affine transformations as
\begin{equation}\label{Dis2}
    \mathcal{D}_2(u;\mathcal{P}_N)=\int_{{\rm SO}(d)}\int_{0}^{1}\int_{\mathbb{T}^d}\left|\mathcal{D}([\tau,\delta,\rho]u; \mathcal{P}_N)\right|^2\de \tau \de \delta \de \rho.
\end{equation}\par
It is time to state the sharp bounds we obtain, starting from the upper one.
\begin{teo}\label{Upper}
There exists a positive dimensional constant $C_d$ such that for every $u\in{\rm BV}(\mathbb{R}^d)\cap L^\infty(\R^d)$, it holds
    \begin{equation*}
    \limsup_{N\to+\infty} N^{(1-d)/d}\inf_{\mathcal{P}_N\subset\mathbb{T}^d}\mathcal{D}_2(u;\mathcal{P}_N)\leq C_d\,  \mathcal{J}(u).
    \end{equation*}
\end{teo}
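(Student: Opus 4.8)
The strategy is to reduce the averaged quadratic discrepancy to an average of Fourier coefficients and then to extract the leading-order term via the weighted Plancherel identity \eqref{Fourier transform of BV functions 2}. First I would expand the discrepancy in Fourier series: writing $\mathfrak{P}\{[\tau,\delta,\rho]u\}$ as a Fourier series on $\mathbb{T}^d$ and using that $\widehat{\mathfrak{P}\{v\}}(n)=\hat v(n)$ for $n\in\mathbb{Z}^d$, one gets $\mathcal{D}([\tau,\delta,\rho]u;\mathcal{P}_N)=\sum_{n\neq 0}\widehat{[\tau,\delta,\rho]u}(n)\sum_{p\in\mathcal{P}_N}e^{2\pi i n\cdot p}$. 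Here the subtraction of $N\hat u(\mathbf 0)$ kills the $n=0$ term. By the scaling and translation rules for the Fourier transform, $\widehat{[\tau,\delta,\rho]u}(n)=\delta^{d}e^{2\pi i\delta\tau\cdot(\rho^{-1}n)}\,\hat u(\delta\rho^{-1}n)$, so taking squared modulus and integrating $\de\tau$ over $\mathbb{T}^d$ makes all cross terms vanish by orthogonality of characters. This leaves
\[
\mathcal{D}_2(u;\mathcal{P}_N)=\int_{\mathrm{SO}(d)}\int_0^1\delta^{2d}\sum_{n\neq 0}|\hat u(\delta\rho^{-1}n)|^2\Big|\sum_{p\in\mathcal{P}_N}e^{2\pi i n\cdot p}\Big|^2\de\delta\de\rho.
\]

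The second step is to choose the point configuration $\mathcal{P}_N$ that minimizes (or nearly minimizes) the trigonometric sums $\big|\sum_p e^{2\pi i n\cdot p}\big|^2$ simultaneously for all relevant $n$: I would take $\mathcal{P}_N$ to be a rational lattice, e.g. $\mathcal{P}_N=\tfrac{1}{m}\mathbb{Z}^d\cap\mathbb{T}^d$ with $N=m^d$, for which $\sum_{p\in\mathcal{P}_N}e^{2\pi i n\cdot p}=N$ if $m\mid n_j$ for all $j$ and $0$ otherwise. Substituting this collapses the sum over $n$ to a sum over $n\in m\mathbb{Z}^d\setminus\{0\}$, each term carrying a factor $N^2$, so that after the substitution $\xi=\delta\rho^{-1}n$ and a change to polar-type coordinates (integrating $\delta$ and $\rho$ together against the lattice points of $m\mathbb{Z}^d$) the sum becomes a Riemann-sum approximation, at scale $1/m$, of a radial average of $|\xi|^2|\hat u(\xi)|^2$. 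Concretely, $\delta^{2d}|\hat u(\delta\rho^{-1}n)|^2$ summed over $n\in m\mathbb{Z}^d$ with $\delta\in(0,1)$ and $\rho\in\mathrm{SO}(d)$ telescopes, after rescaling, into something comparable to $m^{-2}\int_{\mathbb{R}^d}|\hat u(\xi)|^2|\xi|^2\,\frac{\de\xi}{\text{(angular normalization)}}$ truncated appropriately; the two powers of $|\xi|$ that appear are exactly what the extra $\delta$-integration produces via $\int_0^1\delta^{2d}(\cdot)\,\de\delta$ after the change of variables, matching the $|\xi|^2$ weight in \eqref{Fourier transform of BV functions 2}.

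The third step is the asymptotic evaluation: with $N=m^d$, so $N^{(1-d)/d}=m^{1-d}=m\cdot m^{-d}$, the factor $N^2\cdot m^{-2}\cdot(\text{density }m^{-d})=N^{(1-d)/d}$ is arranged so that the limit superior of $N^{(1-d)/d}\mathcal{D}_2(u;\mathcal{P}_N)$ is controlled by $\lim_{R\to\infty}\frac{1}{R}\int_{B_R}|\xi|^2|\hat u(\xi)|^2\de\xi$ up to the dimensional constant coming from the normalization of the $\mathrm{SO}(d)$-integration and the $2\pi^2$ in Theorem~\ref{Supreme main theorem}. Applying \eqref{Fourier transform of BV functions 2} then yields the bound $C_d\,\mathcal{J}(u)$. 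Since we only need one good sequence of configurations to bound the infimum, the subsequence $N=m^d$ suffices for the $\limsup$; for general $N$ one rounds down to the nearest perfect $d$-th power, losing only a bounded multiplicative factor absorbed into $C_d$.

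The main obstacle I expect is making the Riemann-sum / lattice-sum comparison rigorous and uniform: passing from the discrete sum $\sum_{n\in m\mathbb{Z}^d}\int_0^1\int_{\mathrm{SO}(d)}\delta^{2d}|\hat u(\delta\rho^{-1}n)|^2$ to the continuous integral $\int_{\mathbb{R}^d}|\xi|^2|\hat u(\xi)|^2\,\de\xi$ requires control on the regularity (or at least equicontinuity in an averaged sense) of $\hat u$, which for $u\in\mathrm{BV}\cap L^\infty$ is only Hölder-type; one must exploit that Theorem~\ref{Supreme main theorem} already packages the relevant averaged behaviour, so rather than estimating $\hat u$ pointwise I would phrase the comparison directly at the level of the averaged quantity $\frac{1}{R}\int_{B_R}|\xi|^2|\hat u|^2$ and show the lattice sum is asymptotically sandwiched between two such averages at scales $R\sim m$. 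A secondary technical point is justifying the interchange of the (a priori only conditionally convergent, after the $\tau$-integration absolutely convergent) sum over $n$ with the integrals in $\delta$ and $\rho$, which follows from Tonelli once everything is written with $|\hat u|^2\geq 0$.
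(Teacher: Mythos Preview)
Your strategy---Fourier expansion in the translation variable, a lattice configuration $\mathcal{P}_N=\tfrac{1}{m}\mathbb{Z}^d\cap\mathbb{T}^d$ for $N=m^d$, and Theorem~\ref{Supreme main theorem} for the asymptotic---is exactly the paper's route. Two points, one a simplification you are missing and one a genuine gap.

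\textbf{The $\delta,\rho$-integration is explicit, not a Riemann sum.} For each fixed $n\in\mathbb{Z}_*^d$ the map $(\delta,\rho)\mapsto \delta\rho n$ parametrizes $B_{|n|}$, and a direct change of variables gives
\[
\int_{\mathrm{SO}(d)}\int_0^1 \delta^{2d}\,|\hat u(\delta\rho^{-1}n)|^2\,\de\delta\de\rho
= c_d\,|n|^{-2d-1}\int_{B_{|n|}}|\xi|^{d+1}|\hat u(\xi)|^2\,\de\xi.
\]
After integrating \eqref{Fourier transform of BV functions 2} by parts once, $\int_{B_R}|\xi|^{d+1}|\hat u|^2\sim \mathcal{J}(u)R^d/(2d\pi^2)$, so the weight above is $\sim c_d\,\mathcal{J}(u)\,|n|^{-d-1}$. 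Substituting the lattice then gives the bound $c_d\,\mathcal{J}(u)\,N^2\sum_{k\in\mathbb{Z}_*^d}|mk|^{-d-1}\le C_d\,\mathcal{J}(u)\,N^{(d-1)/d}$ with no Riemann-sum comparison and no regularity issues for $\hat u$.

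\textbf{Rounding down to the nearest $d$-th power does not work.} If $m^d\le N<(m+1)^d$ the remainder $r=N-m^d$ can be of order $m^{d-1}$. Taking the $m^d$ lattice points and $r$ extra points, Cauchy--Schwarz on the exponential sum produces a term of size $r^2$ multiplied by the summable weight $\sum_{n}|n|^{-2d-1}\int_{B_{|n|}}|\xi|^{d+1}|\hat u|^2$, hence a contribution of order $m^{2(d-1)}=N^{2(d-1)/d}$ to $\mathcal{D}_2$. The target is $N^{(d-1)/d}$, so for $d\ge 2$ the remainder term dominates and cannot be absorbed into $C_d$. The paper repairs this by iterating the decomposition: write $N=N_1+\cdots+N_{K+1}+N_0$ where each $N_j$ (for $j\ge 1$) is a perfect $d$-th power with $N_{j+1}\le c_d\,N_j^{(d-1)/d}$; after a bounded (dimension-only) number $K$ of steps the residual $N_0$ is $O(N^{(d-1)/(4d)})$, small enough that even the trivial bound $N_0^2$ on its exponential sum yields $o(N^{(d-1)/d})$. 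Without this recursion your argument only establishes the $\limsup$ along the subsequence $N=m^d$, which is not what the theorem claims.
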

As the following result shows, the latter estimate is the best achievable with respect to the order of $N$ and the dependence on $u$.
\begin{teo}\label{Lower}
There exists a positive dimensional constant $C_d$ such that for every $u\in{\rm BV}(\mathbb{R}^d)\cap L^\infty(\R^d)$, it holds
    \begin{equation*}
    \liminf_{N\to+\infty} N^{(1-d)/d}\inf_{\mathcal{P}_N\subset\mathbb{T}^d}\mathcal{D}_2(u;\mathcal{P}_N)\geq C_d\,  \mathcal{J}(u).
    \end{equation*}
\end{teo}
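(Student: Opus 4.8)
The plan is to reduce the quadratic discrepancy $\mathcal{D}_2(u;\mathcal{P}_N)$ to a Fourier-analytic expression and then, after averaging over the affine group, produce a weighted $L^2$-integral of $\hat u$ over an annular region, which is controlled from below by Theorem~\ref{Supreme main theorem}. First I would expand $\mathcal{D}([\tau,\delta,\rho]u;\mathcal{P}_N)$ using the Poisson summation formula: writing $g = [\tau,\delta,\rho]u$, we have $\mathfrak{P}\{g\}(x) = \sum_{n\in\Z^d} \hat g(n) e^{2\pi i n\cdot x}$, so that
\begin{equation*}
\mathcal{D}(g;\mathcal{P}_N) = \sum_{p\in\mathcal{P}_N}\sum_{n\in\Z^d} \hat g(n) e^{2\pi i n\cdot p} - N\hat g(\mathbf 0) = \sum_{n\in\Z^d\setminus\{\mathbf 0\}} \hat g(n)\,\widehat{\mu_N}(-n),
\end{equation*}
where $\mu_N = \sum_{p\in\mathcal{P}_N}\delta_p$ and $\widehat{\mu_N}(n) = \sum_{p} e^{-2\pi i n\cdot p}$. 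Next, using $\widehat{[\tau,\delta,\rho]u}(\xi) = \delta^d e^{2\pi i \tau\cdot\xi}\,\hat u(\delta\rho^{-1}\xi)$ (up to sign conventions to be fixed), I would integrate $|\mathcal{D}(g;\mathcal{P}_N)|^2$ first in $\tau\in\mathbb{T}^d$. Because $\int_{\mathbb{T}^d} e^{2\pi i(\tau\cdot n - \tau\cdot m)}\de\tau = \delta_{nm}$, this orthogonality kills all cross terms and leaves
\begin{equation*}
\int_{\mathbb{T}^d}|\mathcal{D}([\tau,\delta,\rho]u;\mathcal{P}_N)|^2\de\tau = \delta^{2d}\sum_{n\in\Z^d\setminus\{\mathbf 0\}} |\hat u(\delta\rho^{-1}n)|^2\,|\widehat{\mu_N}(n)|^2.
\end{equation*}
Then I would integrate over $\rho\in{\rm SO}(d)$ and $\delta\in(0,1)$. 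The rotation average replaces $|\hat u(\delta\rho^{-1}n)|^2$ by its spherical mean $\fint_{S^{d-1}}|\hat u(\delta|n|\omega)|^2\de\omega$, which depends only on $\delta|n|$; and the $\delta$-integral, after the change of variable and summing the contributions of $n$ with $|n|\sim t$ (there are $\sim t^{d-1}$ of them), should convert the double integral/sum into a single radial integral of $|\xi|^2|\hat u(\xi)|^2$ over a large ball of radius comparable to $N^{1/d}$ — this is precisely the quantity appearing in~\eqref{Fourier transform of BV functions 2}.

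The key remaining ingredient is a lower bound for $\sum_{n}|\widehat{\mu_N}(n)|^2$ weighted appropriately, valid for \emph{every} configuration $\mathcal{P}_N$. Here one uses the elementary fact that $|\widehat{\mu_N}(\mathbf 0)|^2 = N^2$ is excluded, but on average over a dyadic shell $\{|n|\sim 2^k\}$ one has $\sum_{|n|\sim 2^k}|\widehat{\mu_N}(n)|^2 \gtrsim N\cdot 2^{kd}$ as soon as $2^{kd}\gtrsim N$, because $\sum_{|n|\le M}|\widehat{\mu_N}(n)|^2 = \sum_{p,q}\sum_{|n|\le M} e^{2\pi i n\cdot(p-q)}$ and the diagonal $p=q$ alone contributes $N\cdot\#\{|n|\le M\}\sim N M^d$, while the off-diagonal terms are controlled by a Fejér-type kernel that is summable. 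Choosing $M\sim N^{1/d}$ as the cutoff, I would combine this with the previous display and with~\eqref{Fourier transform of BV functions 2} in the form $\frac{1}{R}\int_{B_R}|\xi|^2|\hat u|^2 \to \mathcal{J}(u)/(2\pi^2)$, taking $R\sim N^{1/d}$, to conclude
\begin{equation*}
\inf_{\mathcal{P}_N}\mathcal{D}_2(u;\mathcal{P}_N) \gtrsim N\cdot N^{-1}\cdot N^{1/d}\cdot\frac{\mathcal{J}(u)}{2\pi^2} \cdot c_d= C_d\, \mathcal{J}(u)\, N^{(d-1)/d},
\end{equation*}
which is the claimed bound after multiplying by $N^{(1-d)/d}$ and taking $\liminf$.

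The main obstacle I anticipate is the interchange of the three averages with the infinite sum over $n$ and the limit $N\to\infty$, together with the matching of constants: one must ensure the weighted exponential-sum lower bound $\sum_{|n|\sim M}|\widehat{\mu_N}(n)|^2\gtrsim NM^d$ holds \emph{uniformly} in $\mathcal{P}_N$ (not merely generically), which requires a careful estimate of the off-diagonal Fejér kernel sum $\sum_{|n|\le M} e^{2\pi i n\cdot v}$ for $v\neq\mathbf 0$ in $\mathbb{T}^d$, and one must reconcile the shell-by-shell lower bound with the smoothly-weighted radial integral coming from Theorem~\ref{Supreme main theorem}. A clean way around the dyadic bookkeeping is to integrate $\delta$ against the honest weight it produces and recognize the result as $\int_{B_R}|\xi|^2|\hat u(\xi)|^2\de\xi$ up to a dimensional constant and a lower-order error, so that the single application of~\eqref{Fourier transform of BV functions 2} at scale $R = c\, N^{1/d}$ does all the work; verifying that the error terms (from the off-diagonal of $\mu_N$ and from the tail $|n|\gtrsim N^{1/d}$, the latter handled by Corollary~\ref{Error in Plancherel corollary}) are $o(N^{(d-1)/d})$ uniformly in $\mathcal{P}_N$ is the technical heart of the argument.
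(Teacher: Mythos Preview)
Your overall architecture matches the paper's: expand in Fourier, average over $(\tau,\delta,\rho)$ to obtain the closed form~\eqref{Fourier Series}, bound the exponential sum from below over a ball of radius $\sim N^{1/d}$, and feed in the asymptotic from Section~\ref{Fourier Symp}. Two points, however, need correction.

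First, the exponential--sum lower bound. Your claim $\sum_{|n|\le M}|\widehat{\mu_N}(n)|^2\gtrsim NM^d$ is exactly the Cassels--Montgomery lemma, but your proposed justification does not work. With the sharp cutoff $\{|n|\le M\}$ the kernel $\sum_{|n|\le M}e^{2\pi i n\cdot v}$ is a Dirichlet-type kernel, not Fej\'er; it is not nonnegative, and for adversarial $\mathcal P_N$ the off-diagonal contribution $\sum_{p\ne q}D_M(p-q)$ is \emph{not} uniformly small relative to $NM^d$. The paper's proof (following Siegel) instead minorizes $\mathds 1_{\{|n|\le M\}}$ by the Fourier coefficients $\widehat T(n)\ge 0$ of a \emph{nonnegative} trigonometric polynomial $T$ with $T(0)\gtrsim M^d$; then $\sum_n\widehat T(n)|\widehat{\mu_N}(n)|^2=\sum_{p,q}T(p-q)\ge N\,T(0)$ by positivity, and subtracting the $n=0$ term $N^2$ gives the bound. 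So the ingredient is positivity, not summability.

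Second, the small-$|n|$ regime. After the $(\delta,\rho)$ average the weight attached to each $n$ is $|n|^{-2d-1}\int_{B_{|n|}}|\xi|^{d+1}|\hat u|^2\de\xi$ (note the power is $|\xi|^{d+1}$, not $|\xi|^2$; the correct asymptotic is~\eqref{Fundamental Asymptotic}). This weight is $\sim c\,\mathcal J(u)\,|n|^{-d-1}$ only for $|n|\ge n_0(u)$; for $1\le|n|<n_0$ it can be tiny or even zero (e.g.\ if $\hat u$ vanishes on $B_1$). Since Cassels--Montgomery is applied on the full ball $|n|\le(\kappa N)^{1/d}$, you need a uniform lower bound for the weight there. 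The paper handles this by setting $m(u)=\min_{1\le|n|\le n_0}|n|^{-2d-1}\int_{B_{|n|}}|\xi|^{d+1}|\hat u|^2$, observing that for $N$ large the target bound $\frac{\mathcal J(u)}{4d\pi^2}N^{-(d+1)/d}\le m(u)$ whenever $m(u)>0$, and treating the degenerate case $m(u)=0$ (i.e.\ $\hat u\equiv 0$ on $B_1$) by a perturbation argument: add a smooth $v$ with $\hat v(\mathbf 0)>0$, so $\mathcal J(u+v)=\mathcal J(u)$ and $m(u+v)>0$, and derive a contradiction from the triangle inequality for $\mathcal D_2$. Your sketch does not address this; the tail $|n|\gtrsim N^{1/d}$ that you worry about is actually irrelevant for the lower bound (you simply drop those terms), whereas the small-$|n|$ contribution is the genuine obstacle.
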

In \cite{Beck87}, the latter lower bound is obtained for characteristic functions of convex compact sets (with non-empty interior) of $\mathbb{R}^d$, and in the same setting, the study in \cite{BC90} proved the order $N^{(1-d)/d}$ to be sharp, although a generic set-depending constant replaces the dependence on the perimeter. In an independent work, Montgomery~\cite[Ch.~6]{Mon94} showed that the same lower bound holds for every compact set (with non-empty interior) of $\mathbb{R}^2$ with piecewise-$\mathcal{C}^1$ boundary, hence trading convexity for a requirement of regularity. In particular, his work employed Fourier series and a refinement of an argument of Cassels~\cite{Cas56} for estimating sums of complex exponentials, and we briefly treat the latter in the last section.\par
Last, in the appendix, we adapt an argument of \cite{Herz62} and obtain a functional inequality that retrieves a non-sharp version of the celebrated isoperimetric inequality (when specialized to sets).

\begin{ackn}
    We thank Luca Brandolini, Camillo Brena, Michele Caselli, Leonardo Colzani, Davide Donati,  Giacomo Gigante, Antonio Pedro Ramos, Giorgio Stefani, and Giancarlo Travaglini for the valuable feedback and discussions.
\end{ackn}

\begin{fndg}
    L.G. is supported by UK Research and Innovation (UKRI) under the Horizon Europe funding guarantee [grant number EP/Z000297/1].
\end{fndg}

\section{Fourier asymptotics and {\rm BV} functions}
\label{Fourier Symp}
We shall introduce the main tools from the theory of {\rm PDE}s, harmonic analysis and geometric measure theory. The interested reader can consult \cite{Ev98} and \cite{Hor63} for a better account on {\rm PDE}s and harmonic analysis. For what concerns geometric measure theory, we refer the reader to \cite{AFP00} and \cite{M12}.\par

First, for a function $u\in L^1(\mathbb{R}^d)$, we define its Fourier transform as
\begin{equation*}
    \mathcal{F}\{u\}(\xi)=\hat{u}(\xi)=\int_{\R^d}u(x)e^{-2\pi i \xi\cdot x}\de x,
\end{equation*}
\newline
with clear extension to the space of tempered distributions $\mathcal{S}^\prime(\R^d)$. This choice of normalization implies the following version of Plancherel theorem: for $u,v\in L^2(\R^d)$, it holds
\begin{equation*}
    \int_{\R^d}u(x)v(x)\de x = \int_{\R^d}\hat{u}(\xi)\overline{\hat{v}}(\xi)\de\xi,
\end{equation*}
with $\overline{\hat{v}}$ being the complex conjugate of $\hat{v}$.\par

Now, consider a vector-valued measure $\Vec{\mu}=(\mu_1,...,\mu_d)$ in $\R^d$. We define the total variation of $\Vec{\mu}$ as the measure $|\Vec{\mu}|$ which, to any Borel set $E$, assigns the value
\begin{equation*}
    |\Vec{\mu}|(E)=\sup\left\{\sum_{n}\|\mu(E_n)\|_{\R^d}\,\colon\,(E_n)_n\text{ is a Borel partition of }E\right\}.
\end{equation*}
We say that a (possibly signed and vector-valued) measure $\nu$ is finite if
\begin{equation*}
    |\nu|(\R^d)<+\infty,
\end{equation*}
and we shall denote the vector space of finite measures with values in $\R^k$ with $\mathcal{M}(\R^d;\R^k)$, with the convention that $\mathcal{M}(\R^d)=\mathcal{M}(\R^d;\R^1)$. Equipping such spaces with the norm \begin{equation*}
    \|\mu\|_{\rm TV}=|\mu|(\R^d)
\end{equation*}
makes them Banach spaces. We also remark that it makes sense to compute the Fourier transform of a measure, and indeed, the Fourier transform \begin{equation*}
    \mathcal{F}:\mathcal{M}(\R^d)\to \mathcal{C}_b(\R^d)
\end{equation*}
is a linear and continuous operator with norm equal to $1$, and the same holds component-wise if we consider the operator \begin{equation*}
    \mathcal{F}:\mathcal{M}(\R^d;\R^d)\to \mathcal{C}_b(\R^d;\R^d).
\end{equation*}\par

Now, we proceed with defining the class of functions of bounded variation.
\begin{defn}[${\rm BV}$ functions]
We say that $u\in L^1(\R^d)$ is a function of bounded variation if $Du\in\mathcal{M}(\R^d;\R^d)$, where $Du$ is the distributional gradient of $u$. We denote the space of such functions with ${\rm BV}(\R^d)$.
\end{defn}

For a Borel set $E$ and a Borel measure $\mu$, we shall write $\mu_{|E}$ to denote the Borel measure such that $\mu_{|E}(A)=\mu(E\cap A)$ for any Borel set $A$. Now, we shall recall the decomposition of the gradient of a ${\rm BV}$ function as in \cite[Sec.~3.7]{AFP00}. Indeed, if $u\in{\rm BV}(\R^d)$, then $Du$ can be written as 
\begin{equation}
    \label{Decomposition of gradient}
    Du=\nabla u\leb^n+(u^{\vee}-u^{\wedge})\nu_{S_u}\mathcal{H}^{d-1}_{|S_u}+D^cu.
\end{equation}
In the previous decomposition, $\nabla u$ is the density of the absolutely continuous part with respect to $\leb^n$, and $D^cu$ is the Cantor part of the distributional gradient. The second term, often denoted with $D^ju$, is the jump part of the gradient, and we briefly explain the notation used for it. Here, $S_u$ denotes the approximate discontinuity set of $u$, while $\nu_{S_u}$ stands for its measure-theoretic normal. Namely, we have that $x\in S_u^\mathsf{c}$ if there exists $a\in\R$ such that 
\begin{equation}
    \label{Approximate continuity}
    \lim_{\rho\to 0^+}\fint_{B_\rho(x)}|u(y)-a|\de y=0,
\end{equation}
and on the other hand, for $x\in S_u$, we have 
\begin{equation*}
    \begin{split}
    u^{\vee}(x)&=\inf\big\{t\in[-\infty,+\infty]:\;\lim_{\rho\to 0^+}\rho^{-d}|\{u>t\}\cap B_\rho(x)|=0\big\}, \\
    u^{\wedge}(x)&=\sup\big\{t\in[-\infty,+\infty]:\;\lim_{\rho\to 0^+}\rho^{-d}|\{u<t\}\cap B_\rho(x)|=0\big\}.
\end{split}
\end{equation*}
We shall use the fact that there exists a $\mathcal{H}^{d-1}$-rectifiable Borel set $J_u\subseteq S_u$, usually referred as jump set of $u$, such that $\mathcal{H}^{d-1}(S_u\setminus J_u)=0$ and that allows us to rewrite \eqref{Decomposition of gradient} as
\begin{equation}
    \label{Gradient decomposition on the jump}
    Du=\nabla u\leb^d+(u^+-u^-)\nu_{J_u}\mathcal{H}^{d-1}_{J_u}+D^cu.
\end{equation}
Here, $u^+$ and $u^-$ are the one-sided approximate limits of $u$, defined in such a way that
\begin{equation*}
    \lim_{\rho\to 0^+}\fint_{B_{\rho}^+(x)}|u(y)-u^+(x)|\de y= \lim_{\rho\to 0^+}\fint_{B_{\rho}^-(x)}|u(y)-u^-(x)|\de y=0,
\end{equation*}
where, denoting by $\langle\cdot,\cdot\rangle$ the inner product of $\mathbb{R}^d$, we have
\begin{equation*}
    B_\rho^+(x)=\{x\in B_\rho(x):\langle x,\nu_{J_u}\rangle>0\}\quad\text{and}\quad B_\rho^-(x)=\{x\in B_\rho(x):\langle x,\nu_{J_u}\rangle<0\}.
\end{equation*}\par 

We proceed with the definition of sets of finite perimeter.
\begin{defn}[Finite perimeter set]
    For a Borel set $\Omega\subset\R^d$, we define the perimeter of $\Omega$ as 
    \begin{equation*}
        {\rm Per}(\Omega)=|D\mathds{1}_\Omega|(\R^d),
    \end{equation*}
    and we say that $\Omega$ is of finite perimeter if $\mathds{1}_\Omega\in{\rm BV}(\R^d)$.
\end{defn}
Due to the isoperimetric inequality, if $\Omega$ is of finite perimeter, then either $\Omega$ or $\Omega^\mathsf{c}$ is of finite measure. In the sequel, we shall assume that our sets of finite perimeter come with finite measure.\par
For a set of finite perimeter, it is possible to define the notion of reduced boundary, which identifies the more “regular” points of the boundary. In particular, this coincides with the jump set of the characteristic function of the set itself.
\begin{defn}[Reduced boundary]
    For a set of finite perimeter $\Omega\subset\mathbb{R}^d$, we define its reduced boundary $\partial^{*}\Omega$ as the Borel set of all the points $x\in{\rm supp}|D\mathds{1}_\Omega|$ such that the limit
    \begin{equation*}
        \nu_{\Omega}(x)=\lim_{r\to 0^+}\frac{D\mathds{1}_\Omega(B_{r}(x))}{|D\mathds{1}_\Omega|(B_{r}(x))}
    \end{equation*}
    exists and is equal to $1$.
\end{defn}
\begin{rem}
    Observe that $\partial^{*}\Omega$ is always a subset of the topological boundary, and it may happen that the two do not coincide. Trivially, the reduced boundary of a square consists of its topological boundary except for the $4$ points at the corners.
\end{rem}

We are now in the position to recall part of the celebrated structural result for sets of finite perimeter, namely De Giorgi theorem (see \cite[Thm.~3.59]{AFP00}).
\begin{teo}[De Giorgi]
Let $\Omega\subset\R^d$ be a set of finite perimeter, then $\partial^{*}\Omega$ is countably $\mathcal{H}^{d-1}$ rectifiable and $|D\mathds{1}_{\Omega}|=\mathcal{H}^{d-1}_{|\partial^{*}\Omega}$.
\end{teo}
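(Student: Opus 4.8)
The plan is to carry out De Giorgi's blow-up analysis at points of the reduced boundary. Fix $x\in\partial^{*}\Omega$; after a translation and a rotation we may assume $x=0$ and $\nu_{\Omega}(0)=e_d$. First I would establish uniform density estimates valid at every point of the reduced boundary: using the isoperimetric and relative isoperimetric inequalities one obtains dimensional constants $0<c\le C$ with $c\,r^{d-1}\le |D\mathds{1}_{\Omega}|(B_r(x))\le C\,r^{d-1}$ and $c\le |\Omega\cap B_r(x)|/|B_r(x)|\le 1-c$ for all sufficiently small $r$. These bounds have two consequences: the rescaled characteristic functions $\mathds{1}_{\Omega/r}$ have locally equibounded variation, hence (by ${\rm BV}$ compactness) are precompact in $L^1_{\mathrm{loc}}(\R^d)$, and any limit is the characteristic function of a nontrivial set of locally finite perimeter.

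The core step is to identify this blow-up limit as the half-space $H=\{y\colon\langle y,e_d\rangle\le 0\}$. The mechanism is the defining property of the reduced boundary, namely that the averaged normals $D\mathds{1}_{\Omega}(B_r(x))/|D\mathds{1}_{\Omega}|(B_r(x))$ converge to $e_d$: passing to the blow-up and using lower semicontinuity of the total variation one shows that the limit set $E$ has $D\mathds{1}_E$ pointing in the single direction $e_d$, and a set whose perimeter measure has constant direction must be a half-space orthogonal to that direction. One also checks that the convergence holds along the whole family $r\to0^{+}$, so that the blow-up is unique. I expect this identification — controlling the weak-$*$ convergence of the vector-valued measures $D\mathds{1}_{\Omega/r}$ and ruling out oscillation in the limit — to be the main obstacle.

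Granted the half-space blow-up, the remaining assertions are measure-theoretic. The blow-up shows that at every $x\in\partial^{*}\Omega$ the measure $|D\mathds{1}_{\Omega}|$ admits an approximate tangent plane $\nu_{\Omega}(x)^{\perp}$ and has finite, positive $(d-1)$-density there; by the tangent-plane characterization of rectifiable sets this makes $\partial^{*}\Omega$ countably $\mathcal{H}^{d-1}$-rectifiable. To identify the measure, one first shows $|D\mathds{1}_{\Omega}|(\R^d\setminus\partial^{*}\Omega)=0$: the upper $(d-1)$-density of $|D\mathds{1}_{\Omega}|$ vanishes off a set of mass zero, and a Besicovitch covering argument shows the complement of the reduced boundary is $|D\mathds{1}_{\Omega}|$-negligible. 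Finally, since the half-space blow-up forces the $(d-1)$-density of $|D\mathds{1}_{\Omega}|$ at points of $\partial^{*}\Omega$ to coincide with that of $\mathcal{H}^{d-1}$, the differentiation theorem for Radon measures yields $|D\mathds{1}_{\Omega}|=\mathcal{H}^{d-1}_{|\partial^{*}\Omega}$, as claimed.
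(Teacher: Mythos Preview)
Your outline is the standard De Giorgi blow-up argument and is essentially correct, but note that the paper does not actually prove this theorem: it is stated as a recalled structural result with a reference to \cite[Thm.~3.59]{AFP00}, and no proof is given. Your sketch in fact mirrors the proof found in that reference, so there is nothing to compare beyond observing that you have reproduced the classical argument the authors chose to cite rather than include.
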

\begin{rem}
    We do not delve into the definition of rectifiability in the above sense, and again, we refer the reader to \cite{AFP00} or \cite{M12}. However, one can think of it as a measure-theoretic notion of being $(d-1)$-dimensional.
\end{rem}

Now, following the notation in \cite{MPPP07}, for a function $u\in L^1$, we denote by $T(t)u$ the unique solution to the differential problem
\begin{equation}
\label{Heat equation}
\begin{cases}
        \partial_tv = \Delta v & \text{in }\R^d\times(0,+\infty) \\
        v(0,x)=u&\text{on }\R^d
\end{cases},
\end{equation}
evaluated at time $t>0$. Moreover, we recall the explicit representation via convolution
\begin{equation*}
    (T(t)u)(x)=\int_{\R^d}G_t^{\R^d}(x-y)u(y)\de y,
\end{equation*}
where $G_t^{\R^d}$ stands for the classic Gaussian heat kernel
\begin{equation}
    \label{Heat kernel}
    G_t^{\R^d}(z)=\frac{1}{(4\pi t)^{d/2}}e^{-\frac{|z|^2}{4t}}.
\end{equation}\par

Now, we recall two results on the (relative) heat content. First, we state \cite[Thm.~4.3]{MPPP07}.
\begin{teo}
\label{Differentiability Pallara}
    Let $u,v\in{\rm BV}(\R^d)\cap L^2(\R^d)$, then it holds
    \begin{equation}
        \label{heat convergence BV functions}
        \lim_{t\to 0^+}\frac{\sqrt{\pi}}{\sqrt{t}}\langle u-T(t)u,v\rangle = \int_{J_u\cap J_v}(u^+-u^-)(v^+-v^-)\nu_{J_u}\cdot\nu_{J_v}\de\mathcal{H}^{d-1}.
    \end{equation}
\end{teo}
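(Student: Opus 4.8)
The plan is to realise the heat content as a Gaussian average of squared translation increments, reduce to the diagonal case $u=v$ by polarisation, and then localise near the jump set by one–dimensional slicing. First I would use $\int_{\R^d}G_t^{\R^d}(x-y)\de y=1$ together with the evenness of the Gaussian kernel to symmetrise:
\begin{align*}
\scal{u-T(t)u,v}&=\tfrac12\iint_{\R^d\times\R^d}G_t^{\R^d}(x-y)\big(u(x)-u(y)\big)\big(v(x)-v(y)\big)\de x\de y\\
&=\tfrac12\int_{\R^d}G_t^{\R^d}(z)\,\Psi_{u,v}(z)\de z,\qquad \Psi_{u,v}(z):=\int_{\R^d}\big(u(x)-u(x-z)\big)\big(v(x)-v(x-z)\big)\de x,
\end{align*}
with $\Psi_{u,v}$ even in $z$. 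The substitution $z=\sqrt{4t}\,w$ rewrites the last integral as $\tfrac12\pi^{-d/2}\int_{\R^d}e^{-|w|^2}\Psi_{u,v}(\sqrt{4t}\,w)\de w$, while Cauchy–Schwarz and the standard bound $\|u(\cdot)-u(\cdot-z)\|_{L^2}^2\le 2\|u\|_\infty|z|\,|Du|(\R^d)$ give $|\Psi_{u,v}(z)|\le C_{u,v}|z|$, so that $\tfrac1{\sqrt t}|\Psi_{u,v}(\sqrt{4t}\,w)|\le 2C_{u,v}|w|$ is an $e^{-|w|^2}$–integrable majorant, uniform in $t$. Hence, once it is known that $\ell_e:=\lim_{r\to0^+}\tfrac1r\Psi_{u,v}(re)$ exists for every $e\in\mathbb{S}^{d-1}$, dominated convergence yields $\lim_{t\to0^+}\tfrac1{\sqrt t}\scal{u-T(t)u,v}=\pi^{-d/2}\int_{\R^d}e^{-|w|^2}|w|\,\ell_{w/|w|}\de w$. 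Finally, $(u,v)\mapsto\scal{u-T(t)u,v}$ is symmetric and bilinear, and a direct computation splitting $J_u\cup J_v$ into $J_u\cap J_v$, $J_u\setminus J_v$ and $J_v\setminus J_u$ gives the polarisation identity $\mathcal J(u+v)-\mathcal J(u-v)=4\mathcal J(u,v)$; so it suffices to treat $u=v$.

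The heart of the proof is the identity, for $u=v$,
\[
\ell_e=\lim_{r\to0^+}\frac1r\int_{\R^d}\big(u(x)-u(x-re)\big)^2\de x=\int_{J_u}(u^+-u^-)^2\,|\scal{\nu_{J_u},e}|\,\de\mathcal H^{d-1}.
\]
I would slice along $e$: writing $x=y+se$ with $y\in e^\perp$, the ${\rm BV}$ slicing theorem gives that for $\mathcal H^{d-1}$–a.e.\ $y$ the slice $s\mapsto u(y+se)$ lies in ${\rm BV}(\R)$, with derivative $\mu_y$ equal to the slice of the scalar measure $\scal{Du,e}$; and for a one–dimensional ${\rm BV}$ function a direct computation gives $\int_\R\big(u(y+se)-u(y+(s-r)e)\big)^2\de s=\iint\big(r-|\sigma-\sigma'|\big)^{+}\de\mu_y(\sigma)\de\mu_y(\sigma')$. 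Thus $\tfrac1r\Psi_{u,u}(re)=\int_{e^\perp}\big(\iint(1-|\sigma-\sigma'|/r)^{+}\de\mu_y\de\mu_y\big)\de\mathcal H^{d-1}(y)$, and as $r\to0^+$ the kernel $(1-|\sigma-\sigma'|/r)^{+}$ converges boundedly to $\mathds{1}_{\{\sigma=\sigma'\}}$, so for each fixed $y$ the inner integral tends to $\sum_\sigma\mu_y(\{\sigma\})^2$: only the atomic (i.e.\ jump) part of $\mu_y$ survives, the absolutely continuous and Cantor parts being non–atomic — the analytic incarnation of the heuristic from the introduction. To identify the answer geometrically, by De Giorgi's theorem and the slicing description of $D^ju$, for $\mathcal H^{d-1}$–a.e.\ $y$ the atoms of $\mu_y$ lie at the $s$ with $y+se\in J_u$ and $\scal{\nu_{J_u},e}(y+se)\ne0$, with $|\mu_y(\{s\})|=|u^+-u^-|(y+se)$; and the area formula for the orthogonal projection $J_u\to e^\perp$, whose tangential Jacobian at a point is $|\scal{\nu_{J_u},e}|$, gives $\int_{e^\perp}\sum_{s:\,y+se\in J_u}g(y+se)\de\mathcal H^{d-1}(y)=\int_{J_u}g\,|\scal{\nu_{J_u},e}|\de\mathcal H^{d-1}$ for Borel $g\ge0$. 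Combining these produces the displayed formula for $\ell_e$.

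Plugging this back in, $\tfrac1{\sqrt t}\Psi_{u,u}(\sqrt{4t}\,w)\to 2|w|\,\ell_{w/|w|}=2\int_{J_u}(u^+-u^-)^2|\scal{\nu_{J_u},w}|\de\mathcal H^{d-1}$, whence by Tonelli
\[
\lim_{t\to0^+}\frac1{\sqrt t}\scal{u-T(t)u,u}=\pi^{-d/2}\int_{J_u}(u^+-u^-)^2\Big(\int_{\R^d}e^{-|w|^2}|\scal{\nu_{J_u},w}|\de w\Big)\de\mathcal H^{d-1}.
\]
Since $\int_{\R^d}e^{-|w|^2}|\scal{\nu,w}|\de w=\big(\int_\R e^{-s^2}|s|\de s\big)\big(\int_{\R^{d-1}}e^{-|w'|^2}\de w'\big)=\pi^{(d-1)/2}$ is independent of the unit vector $\nu$, the right–hand side equals $\pi^{-1/2}\mathcal J(u)$, that is $\lim_{t\to0^+}\tfrac{\sqrt\pi}{\sqrt t}\scal{u-T(t)u,u}=\mathcal J(u)$; together with the polarisation identity of the first paragraph this gives \eqref{heat convergence BV functions} in full.

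The symmetrisation, the rescaling and dominated convergence in $w$, the polarisation, and the evaluation of the Gaussian constant are all routine. The genuine obstacle is exchanging $\lim_{r\to0^+}$ with $\int_{e^\perp}$ in the second paragraph for an arbitrary $u\in{\rm BV}(\R^d)\cap L^\infty(\R^d)$: on an individual slice the total variation $|\mu_y|(\R)$ need not be bounded, and it is in general not even square–integrable in $y$, so no na\"ive integrable majorant for the inner integrals is available. One has to exploit the $\mathcal H^{d-1}$–rectifiability of $J_u$ together with a truncation in the jump size — the part of $J_u$ where $|u^+-u^-|<\delta$ contributes $O(\delta)$ uniformly in $r$ — to reduce to a jump set of finite $\mathcal H^{d-1}$–measure, on which the slice analysis is under control. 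Alternatively, one can dispense with slicing and argue by blow–up: at $\mathcal H^{d-1}$–a.e.\ point of $J_u$ the rescalings of $u$ converge in $L^1_{\mathrm{loc}}$ to the two–valued step function across the approximate tangent hyperplane, and a Besicovitch covering reduces $\ell_e$ to the elementary half–space computation — the price being to make the blow–up uniform enough to sum the local contributions. Either route, this localisation is where essentially all the work lies; the rest is bookkeeping.
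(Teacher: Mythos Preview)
The paper does not prove this statement: it is quoted verbatim from \cite[Thm.~4.3]{MPPP07} and used as a black box, so there is no in-paper proof to compare against. What the paper does prove is the closely related Proposition~\ref{Technical proposition} (the existence of $\lim_{t\to0^+}H'(t)$ rather than the right-differentiability of $H$ at $0$), and the method there is genuinely different from yours. The authors differentiate $H(t)=\langle T(t^2)u,v\rangle$, integrate by parts once to express $H'(t)$ as an integral of a smooth kernel against the measure $Dv$, and then split the domain into $J_u$ and $S_u^{\mathsf c}$; dominated convergence is then immediate because the majorant is simply the finite total variation $|Dv|(\R^d)$ times the uniform bound~\eqref{Bound on g}. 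No slicing, no directional limits $\ell_e$, and in particular no exchange of $\lim_{r\to0^+}$ with an integral over $e^\perp$ is needed. Your symmetrise--rescale--slice route is conceptually appealing and close in spirit to the ``nonlocal'' pictures of $\mathcal J$, but the exchange-of-limits obstacle you honestly flag is precisely what the paper's integration-by-parts manoeuvre sidesteps.

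Two further points on your argument as written. First, your majorant $|\Psi_{u,v}(z)|\le C_{u,v}|z|$ rests on $\|u\|_\infty$ via $\|u(\cdot)-u(\cdot-z)\|_{L^2}^2\le 2\|u\|_\infty|z|\,|Du|(\R^d)$, yet the stated hypotheses are only ${\rm BV}\cap L^2$; under $L^2$ alone there is in general no linear-in-$|z|$ bound on $\Psi_{u,u}$ (indeed $\mathcal J(u)$ can be infinite for $u\in{\rm BV}(\R^d)\cap L^2(\R^d)$ when $d\ge2$), so either the hypotheses must be strengthened to $L^\infty$ --- as the paper does in Proposition~\ref{Technical proposition} --- or a different domination is required. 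Second, even under $L^\infty$, the truncation-in-jump-size you propose to justify the interchange still needs a uniform-in-$r$ estimate on the small-jump contribution, and that is not free; the blow-up alternative has the same issue of summing local contributions. If your aim is to reach the paper's applications, it is simpler to adopt the paper's route: one integration by parts trades the delicate $L^2$ slicing for a pointwise analysis against $Dv$, whose total mass supplies the envelope automatically.
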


\begin{rem}
For our purposes, given $u\in{\rm BV}(\R^d)\cap L^\infty(\mathbb{R}^d)$, we consider the map
\begin{equation}
\label{Relative BV heat content}
    H(t)=\int_{\R^d}(T(t^2)u)(x)v(x)\de x.
\end{equation}
By the latter result, we can only deduce that $H$ is (right) differentiable at $t=0$. In order to prove Theorem~\ref{Supreme main theorem}, we actually need
\begin{equation*}
    \lim_{t\to 0^+}H'(t)\quad\text{to exist},
\end{equation*}
and in this case, the limit would clearly coincide with the right-hand term of \eqref{heat convergence BV functions}. We address such a matter with Proposition~\ref{Technical proposition}.
\end{rem}

%\begin{rem}
 %   For how the statement is written, we are not able to take advantage of it: indeed, in this way, we can only prove that the map
%\begin{equation}
%\label{Relative BV heat content}
%    H(t)=\int_{\R^d}(T(t^2)u)(x)v(x)\de x
%\end{equation}
%is differentiable at $t=0$. What we need for our applications is that $\lim_{t\to 0^+}H^\prime(t)$ exists (and in this case, the limit would clearly coincides with \eqref{heat convergence BV functions}). In \cite{MPPP07}, the authors are able to prove that such a limit exists for characteristic functions of sets. However, they don't prove it for general ${\rm BV}$ functions. To obtain \eqref{heat convergence BV functions}, they rely on the result for characteristic functions and use Coarea formula, therefore bypassing the existence of $\lim_{t\to 0^+}H^\prime(t)$. Our Proposition~\ref{Technical proposition} address this matter.
%\end{rem}

Secondly, we state \cite[Thm 3.4]{MPPP07}. The following is a sufficient condition for a set to be of finite perimeter in terms of its (relative) heat content, and we use it in the proof of Corollary~\ref{Main corollary 1}.
\begin{teo}
    \label{Pallara heat implies finite perimeter}
    Let $\Omega\subset\R^d$ be such that either $|\Omega|<+\infty$ or $|\Omega^\mathsf{c}|<+\infty$. If 
    \begin{equation*}
        \liminf_{t\to 0^+}\frac{1}{t}\int_{\Omega^\mathsf{c}}T(t^2)\mathds{1}_\Omega(x)\de x <+\infty,
    \end{equation*}
    then $\Omega$ has finite perimeter.
\end{teo}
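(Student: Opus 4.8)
The plan is to recognize the short‑time heat content as an averaged $L^1$‑modulus of continuity of $\mathds{1}_\Omega$, and then to read off finiteness of the perimeter from a difference‑quotient characterization of {\rm BV}. First I would reduce to the case $|\Omega|<+\infty$: by self‑adjointness of the heat semigroup $\int_{\Omega^\mathsf{c}}T(t^2)\mathds{1}_\Omega\de x=\int_{\Omega}T(t^2)\mathds{1}_{\Omega^\mathsf{c}}\de x$, so the hypothesis is symmetric under $\Omega\leftrightarrow\Omega^\mathsf{c}$, and so is the conclusion (having finite perimeter). Thus we may assume $\mathds{1}_\Omega\in L^1(\R^d)$. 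The key computation is then the identity
\begin{equation*}
    \int_{\Omega^\mathsf{c}}T(t^2)\mathds{1}_\Omega(x)\de x=\int_{\R^d}G_{t^2}^{\R^d}(z)\,\big|(\Omega-z)\setminus\Omega\big|\de z=\frac12\int_{\R^d}G_{t^2}^{\R^d}(z)\,\big\|\mathds{1}_\Omega(\cdot+z)-\mathds{1}_\Omega\big\|_{L^1(\R^d)}\de z,
\end{equation*}
obtained from the convolution representation of $T(t^2)$ via the substitution $y=x+z$ and Fubini, using $|(\Omega-z)\setminus\Omega|=|\Omega\setminus(\Omega-z)|$ (valid since $|\Omega|<+\infty$) and $\|\mathds{1}_\Omega(\cdot+z)-\mathds{1}_\Omega\|_{L^1}=|(\Omega-z)\triangle\Omega|$.

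Next I would pick a sequence $t_n\to0^+$ realizing the finite $\liminf$ in the hypothesis, say $t_n^{-1}\int_{\Omega^\mathsf{c}}T(t_n^2)\mathds{1}_\Omega\de x\le M$. The parabolic rescaling $z=t_nw$, together with $G_{t^2}^{\R^d}(z)=t^{-d}G_1^{\R^d}(z/t)$, turns this into
\begin{equation*}
    \int_{\R^d}G_1^{\R^d}(w)\,\frac{\big\|\mathds{1}_\Omega(\cdot+t_nw)-\mathds{1}_\Omega\big\|_{L^1}}{t_n}\de w\le 2M\qquad\text{for every }n.
\end{equation*}
Writing the integrand as $|w|\cdot (t_n|w|)^{-1}\big\|\mathds{1}_\Omega(\cdot+t_n|w|\,\tfrac{w}{|w|})-\mathds{1}_\Omega\big\|_{L^1}$ and applying Fatou's lemma gives, with $\underline V(e)\eqdef\liminf_{h\to0^+}h^{-1}\|\mathds{1}_\Omega(\cdot+he)-\mathds{1}_\Omega\|_{L^1}$ for $e\in S^{d-1}$ (note $\underline V(-e)=\underline V(e)$),
\begin{equation*}
    \int_{\R^d}G_1^{\R^d}(w)\,|w|\,\underline V\!\left(\frac{w}{|w|}\right)\de w\le 2M,\qquad\text{hence}\qquad \int_{S^{d-1}}\underline V(e)\de\mathcal{H}^{d-1}(e)<+\infty,
\end{equation*}
after passing to polar coordinates, the radial factor $\int_0^{+\infty}r^dG_1^{\R^d}(re)\de r$ being a finite positive dimensional constant.

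It then remains to deduce $\mathds{1}_\Omega\in{\rm BV}(\R^d)$. For a scalar $g\in\mathcal{C}^1_c(\R^d)$ and a unit vector $v$, letting $h\to0^+$ along a suitable sequence in the elementary identity $\int_{\R^d}\mathds{1}_\Omega\,\tfrac{g(\cdot+hv)-g}{h}\de x=\int_{\R^d}\tfrac{\mathds{1}_\Omega(\cdot-hv)-\mathds{1}_\Omega}{h}\,g\de x$ yields $\big|\int_{\R^d}\mathds{1}_\Omega\,\partial_v g\de x\big|\le\underline V(v)\|g\|_{L^\infty}$. Given $\varphi\in\mathcal{C}^1_c(\R^d;\R^d)$ with $\|\varphi\|_{L^\infty}\le1$ and a rotation $\rho\in{\rm SO}(d)$ with columns $v_1,\dots,v_d$, the decomposition $\operatorname{div}\varphi=\sum_j\partial_{v_j}(\varphi\cdot v_j)$ gives $\big|\int_{\R^d}\mathds{1}_\Omega\operatorname{div}\varphi\de x\big|\le\sum_j\underline V(\rho e_j)$; taking the supremum over $\varphi$, then averaging over ${\rm SO}(d)$ (the push‑forward of the Haar measure under $\rho\mapsto\rho e_1$ being uniform on $S^{d-1}$), yields
\begin{equation*}
    |D\mathds{1}_\Omega|(\R^d)\le\frac{d}{\mathcal{H}^{d-1}(S^{d-1})}\int_{S^{d-1}}\underline V(e)\de\mathcal{H}^{d-1}(e)<+\infty,
\end{equation*}
so $\Omega$ has finite perimeter. (Alternatively, one may pick $d$ linearly independent directions $e$ with $\underline V(e)<+\infty$ — which exist since $\underline V\in L^1(S^{d-1})$ — so that each $D_e\mathds{1}_\Omega$ is a finite measure by Riesz representation, and then invert the linear system for $\nabla\mathds{1}_\Omega$.)

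The crux of the argument is the identity in the first paragraph, which is where the heat semigroup is traded for a purely metric quantity; once this is in hand the rest is soft. The one delicate point is that the hypothesis provides only a $\liminf$ (along a sequence $t_n$), not a $\limsup$, so the difference‑quotient characterization of {\rm BV} must be set up with $\liminf$'s throughout and Fatou's lemma applied in the right direction — which is precisely how the computation above is arranged.
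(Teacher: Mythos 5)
Your proof is correct, but note that the paper does not prove this statement at all: it is imported verbatim from \cite[Thm.~3.4]{MPPP07} and used as a black box in the proof of Corollary~\ref{Main corollary 1}. Your argument is a legitimate self-contained substitute, and it follows the natural semigroup-to-geometry mechanism: rewrite the relative heat content as the Gaussian average $\tfrac12\int G_{t^2}^{\R^d}(z)\,|(\Omega-z)\triangle\Omega|\de z$, rescale parabolically, apply Fatou along a sequence realizing the $\liminf$, and convert the resulting integrability of the directional liminf difference quotients $\underline V(e)$ over $\mathbb{S}^{d-1}$ into finiteness of the variation through the sup-characterization of $|D\mathds{1}_\Omega|$ (or, as in your parenthetical remark, via Riesz representation in $d$ independent directions); all the individual steps (self-adjointness reduction, the change of variables giving $|(\Omega-z)\setminus\Omega|$, the identity $\operatorname{div}\varphi=\sum_j\partial_{v_j}(\varphi\cdot v_j)$ for an orthonormal frame, and the Haar averaging) check out. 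Two points are worth making explicit in a write-up: (i) the polar-coordinate and ${\rm SO}(d)$-averaging steps require $\underline V$ to be Borel measurable on $\mathbb{S}^{d-1}$, which is routine because $(h,e)\mapsto h^{-1}\|\mathds{1}_\Omega(\cdot+he)-\mathds{1}_\Omega\|_{L^1}$ is continuous for $h>0$, so $\underline V$ is a countable supremum of infima of continuous functions; (ii) in the case $|\Omega^{\mathsf c}|<+\infty$ the conclusion must be read in the variational sense $|D\mathds{1}_\Omega|(\R^d)<+\infty$ (equivalently, $\Omega^{\mathsf c}$ has finite perimeter), since the paper's definition of finite perimeter presupposes $\mathds{1}_\Omega\in L^1(\R^d)$; your reduction via self-adjointness of $T(t^2)$ is exactly what handles this case. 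What your route buys is independence from the external reference; what the citation buys the authors is brevity, since the theorem only serves as an auxiliary criterion there.
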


 We proceed with the proof of the main technical result of the section.
 
\begin{prop}
\label{Technical proposition}
    Let $u,v\in {\rm BV}(\R^d)$, with $u\in L^\infty(\R^d)$ or $v\in L^\infty(\R^d)$, then 
    \begin{equation}
        \lim_{t\to 0^+}H^\prime(t)=-\frac{\mathcal{J}(u,v)}{\sqrt{\pi}}.
    \end{equation}
\end{prop}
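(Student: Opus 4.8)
The plan is to write $H'$ in closed form, reduce the identity to the diagonal case $u=v$ by a truncation and a polarization, and then settle the diagonal case by a Tauberian argument built on Theorem~\ref{Differentiability Pallara}. First I would compute $H'$: by self-adjointness of the heat semigroup, $H(t)=\int_{\R^d}(T(t^2)u)\,v\de x=\int_{\R^d}(T(t^2)v)\,u\de x$, so I may assume $u\in L^\infty(\R^d)$. Writing $T(t^2)u=G^{\R^d}_{t^2}*u$ and using $\partial_sG^{\R^d}_s=\Delta G^{\R^d}_s$, differentiation under the integral sign is licensed by the bound $2t\,\|\Delta G^{\R^d}_{t^2}\|_{L^1}\|u\|_{L^\infty}\,|v|$ (with $\|\Delta G^{\R^d}_{t^2}\|_{L^1}=c_d\,t^{-2}$) and gives $H'(t)=2t\int_{\R^d}(\Delta T(t^2)u)\,v\de x$. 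Since $\nabla(G^{\R^d}_s*u)=G^{\R^d}_s*Du$ for $u\in{\rm BV}(\R^d)$, and $G^{\R^d}_{t^2}*Du$ is a bounded $C^1$ vector field with bounded derivatives, an integration by parts against the finite measure $Dv$ (made rigorous by a cut-off argument, using $v\in L^1(\R^d)$) should yield
\begin{equation}\label{plan:Hprime}
    H'(t)=-2t\int_{\R^d}\bigl(G^{\R^d}_{t^2}*Du\bigr)\cdot\de Dv .
\end{equation}
The scaling estimate $\|G^{\R^d}_{t^2}*Du\|_{L^\infty}\le\|\nabla G^{\R^d}_{t^2}\|_{L^1}\|u\|_{L^\infty}=C_d\,t^{-1}\|u\|_{L^\infty}$ will be the workhorse, since it yields $\sup_{t>0}2t\,\|G^{\R^d}_{t^2}*Du\|_{L^\infty}\le2C_d\|u\|_{L^\infty}<\infty$.

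Next I would remove the remaining asymmetry and then polarize. Truncating, $v_M:=(-M)\vee(v\wedge M)\in{\rm BV}(\R^d)\cap L^\infty(\R^d)$, the chain rule in ${\rm BV}$ gives, along a suitable sequence $M\to\infty$, $\|Dv_M-Dv\|_{\rm TV}\to0$; writing $H_{u,w}$ for the functional \eqref{Relative BV heat content} attached to $(u,w)$, formula \eqref{plan:Hprime} with the workhorse bound then gives $\sup_{t>0}\lvert H_{u,v_M}'(t)-H_{u,v}'(t)\rvert\le 2C_d\|u\|_{L^\infty}\|Dv_M-Dv\|_{\rm TV}\to0$, while $\mathcal{J}(u,v_M)\to\mathcal{J}(u,v)$ by dominated convergence on $J_u\cap J_v$ (the integrand being dominated there since $u\in L^\infty$). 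Hence, once the statement is known for pairs in ${\rm BV}(\R^d)\cap L^\infty(\R^d)$, $H_{u,v}'$ is a uniform limit of functions each extending continuously to $t=0$, so $\lim_{t\to0^+}H_{u,v}'(t)$ exists and equals $\lim_M\bigl(-\mathcal{J}(u,v_M)/\sqrt\pi\bigr)=-\mathcal{J}(u,v)/\sqrt\pi$. This reduces us to $u,v\in{\rm BV}(\R^d)\cap L^\infty(\R^d)\subset L^2(\R^d)$; there, since $(u,v)\mapsto H_{u,v}(t)=\langle T(t^2)u,v\rangle$ is symmetric bilinear, $H_{u,v}'=\tfrac14\bigl(H_{u+v,u+v}'-H_{u-v,u-v}'\bigr)$, and writing $D^jw=g_w\,\mathcal{H}^{d-1}$ with $g_w=(w^+-w^-)\nu_{J_w}$ (set to $0$ off $J_w$) one has $\mathcal{J}(u,v)=\int_{\R^d}g_u\cdot g_v\de\mathcal{H}^{d-1}$ and $g_{u\pm v}=g_u\pm g_v$ $\mathcal{H}^{d-1}$-a.e.\ (because $D^j(u+v)=D^ju+D^jv$, see \cite[Sec.~3.7]{AFP00}), whence $\mathcal{J}(u+v)-\mathcal{J}(u-v)=4\mathcal{J}(u,v)$. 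So it is enough to treat the diagonal case $u=v=:w\in{\rm BV}(\R^d)\cap L^\infty(\R^d)$.

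For the diagonal case I would pass to the spectral side. Set $\phi(s):=\langle T(s)w,w\rangle=\|T(s/2)w\|_{L^2}^2=\int_{\R^d}e^{-4\pi^2|\xi|^2s}|\hat w(\xi)|^2\de\xi=\int_0^{\infty}e^{-4\pi^2\lambda s}\de\sigma(\lambda)$, where $\sigma(\lambda)=\int_{\{|\xi|^2\le\lambda\}}|\hat w|^2\de\xi$ is nondecreasing with $\sigma(\infty)=\|w\|_{L^2}^2<\infty$. As $H_w(t)=\phi(t^2)$, hence $H_w'(t)=2t\,\phi'(t^2)$, it suffices (with $s=t^2$) to prove $2\sqrt s\,\phi'(s)\to-\mathcal{J}(w)/\sqrt\pi$ as $s\to0^+$. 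Put $\psi(s):=\phi(0)-\phi(s)=\langle w-T(s)w,w\rangle$: differentiating under the integral, $\psi'(s)=-\phi'(s)=4\pi^2\int_0^\infty\lambda e^{-4\pi^2\lambda s}\de\sigma(\lambda)\ge0$ is nonincreasing in $s$, while Theorem~\ref{Differentiability Pallara} (with $u=v=w$) gives $\psi(s)\sim L\sqrt s$ with $L=\mathcal{J}(w)/\sqrt\pi$. A monotone-density (Tauberian) argument then upgrades this to $\psi'(s)\sim\tfrac12 L\,s^{-1/2}$: for $\theta\in(0,1)$, monotonicity of $\psi'$ gives $(1-\theta)s\,\psi'(s)\le\psi(s)-\psi(\theta s)\le(1-\theta)s\,\psi'(\theta s)$, and dividing by $(1-\theta)\sqrt s$, using $\psi\sim L\sqrt{\cdot}$, and letting $\theta\to1^-$ forces $\sqrt s\,\psi'(s)\to L/2$. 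Hence $2\sqrt s\,\phi'(s)=-2\sqrt s\,\psi'(s)\to-L$, i.e.\ $\lim_{t\to0^+}H_w'(t)=-\mathcal{J}(w)/\sqrt\pi$, as desired.

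The crux is this last step. Theorem~\ref{Differentiability Pallara} by itself gives only one-sided differentiability of $H$ at the origin, and upgrading it to the \emph{existence} of $\lim_{t\to0^+}H'(t)$ hinges on the positivity of the spectral measure $\de\sigma$ — equivalently on the monotonicity of $\psi'$ — which is exactly why the reduction to the diagonal case is unavoidable (for $u\neq v$ the off-diagonal $H'$ need not be monotone). By contrast, the preceding steps are bookkeeping; the only genuine care there is in the integration by parts behind \eqref{plan:Hprime} and in the total-variation convergence $\|Dv_M-Dv\|_{\rm TV}\to0$ under truncation.
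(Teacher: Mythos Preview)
Your proof is correct and takes a genuinely different route from the paper's. The paper argues directly and geometrically: it writes
\[
H'(t)=-\frac{1}{(4\pi)^{d/2}}\int_{\R^d}g(x,t)\de Dv(x),\qquad g(x,t)=\int_{\R^d}ze^{-|z|^2/4}u(x-tz)\de z,
\]
splits the $Dv$-integral over $S_u^{\mathsf c}$ and $J_u$, shows the contribution from $S_u^{\mathsf c}$ vanishes via approximate continuity of $u$ and dominated convergence, and on $J_u\cap J_v$ decomposes $g$ over the half-spaces $H^{\pm}_{\nu_{J_u}(x)}$ to compute the limit explicitly from the one-sided traces $u^{\pm}$. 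No appeal to Theorem~\ref{Differentiability Pallara} and no polarization occur. Your approach instead (i) reduces to $u,v\in{\rm BV}\cap L^\infty$ by truncating $v$ and controlling the error via the uniform bound $2t\|G_{t^2}^{\R^d}*Du\|_{L^\infty}\le C_d\|u\|_{L^\infty}$, (ii) polarizes to the diagonal case $u=v=w$, and (iii) there uses Theorem~\ref{Differentiability Pallara} to get $\psi(s)\sim L\sqrt s$ and upgrades to $\psi'(s)\sim\tfrac12 L s^{-1/2}$ by a monotone-density argument, exploiting that $\psi'(s)=4\pi^2\int_0^\infty\lambda e^{-4\pi^2\lambda s}\de\sigma(\lambda)$ is completely monotone. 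What your route buys is brevity and a clean structural explanation for why the one-sided derivative of $H$ at $0$ automatically yields $\lim_{t\to0^+}H'(t)$ (namely, positivity of the spectral measure in the diagonal case); what the paper's route buys is self-containment (it does not rely on \cite{MPPP07}) and finer pointwise control of the kernel at jump versus continuity points of $u$, which is of independent geometric interest.
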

\begin{proof}
    Without loss of generality, let us assume $u\in L^\infty(\R^d)$. Moreover, let $H$ be defined as in \eqref{Relative BV heat content}. Clearly $H\in \mathcal{C}^1((0,+\infty))$, and by \eqref{Heat equation}, for every $t\in(0,+\infty)$, we are allowed to write
    \begin{equation}
    \label{Derivative of H}
        H^\prime(t)=2t\int_{\R^d}\Delta (T(t^2)u)(x)v(x)\de x = -2t\int_{\R^d}\nabla (T(t^2)u)(x)\de Dv(x).
    \end{equation}
    We shall now compute the term $\nabla (T(t^2)u)(x)$ explicitly. Namely, we have
    \begin{equation*}
    \begin{split}
        \nabla (T(t^2)u)(x) &= \int_{\R^d}\nabla_xH_{\R^d}(x-y,t^2)u(y)\de y \\
        &=-\frac{1}{2t^2}\int_{\R^d}(x-y)\frac{e^{-\frac{|x-y|^2}{4t^2}}}{(4\pi t^2)^{d/2}}u(y)\de y \\
        &=-\frac{1}{(4\pi)^{d/2}2t}\int_{\R^d}ze^{-\frac{|z|^2}{4}}u(x-tz)\de z.
    \end{split}
\end{equation*}
    Therefore, \eqref{Derivative of H} becomes
    \begin{equation}
    \label{H derivative}
        H^\prime(t)=-\frac{1}{(4\pi)^{d/2}}\int_{\R^d}\int_{\R^d}ze^{-\frac{|z|^2}{4}}u(x-tz)\de z\de Dv(x).
    \end{equation}
    Now, let us consider
    \begin{equation}
    \label{g function}
        g(x,t)=\int_{\R^d}ze^{-\frac{|z|^2}{4}}u(x-tz)\de z,
    \end{equation}
    and split the integral in \eqref{H derivative} as
    \begin{equation*}
        H^\prime(t)=G_1(t)+G_2(t),
    \end{equation*}
    with
    \begin{equation*}
        G_1(t)\coloneq-\frac{1}{(4\pi)^{d/2}}\int_{J_u}g(x,t)\de Dv(x)\quad\text{and}\quad G_2(t)\coloneq-\frac{1}{(4\pi)^{d/2}}\int_{S_u^\mathsf{c}}g(x,t)\de Dv(x).
    \end{equation*}
    Here, as in \eqref{Decomposition of gradient}, $S_u$ and $J_u$ stand respectively for the approximate discontinuity set and jump set of $u$ (as in \eqref{Gradient decomposition on the jump}). In particular, we are allowed to write the latter decomposition since $\mathcal{H}^{d-1}(S_u\setminus J_u)=0$. Moreover, observe that $g(\cdot,t)$ is a continuous function such that
    \begin{equation}
    \label{Bound on g}
        \sup_{x\in \R^d}|g(x,t)|\leq \|u\|_{L^\infty}\int_{\R^d}|z|e^{-\frac{|z|^2}{4}}\de z= 2^{d+1}\Gamma\left((d+1)/2\right)\|u\|_{L^\infty}.
    \end{equation}
    \newline
    \emph{Claim}: $\lim_{t\to 0^+}G_2(t)=0$.
    \newline
    Let us prove the claim. Observe that, by \eqref{Approximate continuity}, for every $x\in S_u^\mathsf{c}$ we are allowed to write
    \begin{equation*}
        g(x,t)=\int_{\R^d}ze^{-\frac{|z|^2}{4}}\big(u(x-tz)-\Tilde{u}(x)\big)\de z,
    \end{equation*}
    where $\Tilde{u}(x)$ stands for the approximate limit of $u$ at $x$. Now, fix a $R>0$ and estimate $g$ as
    \begin{equation*}
    \begin{split}
        |g(x,t)|&\leq \int_{B_R}|z|e^{-\frac{|z|^2}{4}}|u(x-tz)-\Tilde{u}(x)|\de z + \int_{B_R^\mathsf{c}}|z|e^{-\frac{|z|^2}{4}}|u(x-tz)-\Tilde{u}(x)|\de z \\
        &\leq \int_{B_R}|z|e^{-\frac{|z|^2}{4}}|u(x-tz)-\Tilde{u}(x)|\de z +2\|u\|_{L^\infty}\int_{B_R^\mathsf{c}}|z|e^{-\frac{|z|^2}{4}}\de z.
    \end{split}
\end{equation*}
    We shall now take the $\limsup$ as $t\to 0^+$ of the previous expression, so that we get 
    \begin{equation*}
        \limsup_{t\to 0^+}|g(x,t)|\leq 2\|u\|_{L^\infty}\int_{B_R^\mathsf{c}}|z|e^{-\frac{|z|^2}{4}}\de z,
    \end{equation*}
    since $x$ is a point of approximate continuity. Finally, by taking the $\limsup$ as $R\to+\infty$, we obtain that
    \begin{equation*}
        \lim_{t\to 0^+}g(x,t)=0\quad\forall x\in S_u^\mathsf{c}. 
    \end{equation*}
    Using the fact that $|Dv|(\R^d)<+\infty$ and the bound in \eqref{Bound on g}, we obtain the claim by a dominated convergence argument.\par 
    
We are thus left to study what happens in the set $J_u$. First, we have that $J_u$ is countably $\mathcal{H}^{d-1}$-rectifiable, hence we can apply \cite[Prop.~3.92]{AFP00} to deduce that $|D^cv|(J_u)=0$. Moreover, the set $J_u$ has Lebesgue measure zero, and therefore, it only remains to deal with
\begin{equation}
\label{final boss}
   G_1(t)=-\frac{1}{(4\pi)^{d/2}}\int_{J_u\cap J_v}g(x,t)\cdot\nu_{J_v}(v^+-v^-)\de\mathcal{H}^{d-1}(x).
\end{equation}
Now, define
\begin{equation*}
    f(x,t)\coloneq\langle g(x,t),\nu_{J_v}(x)\rangle,
\end{equation*}
and consider 
\begin{equation*}
    H^{+}_{\nu_{J_u}(x)}=\{z\in\R^d:\;\langle z,\nu_{J_u}(x)\rangle\geq0\}\quad\text{and}\quad H^{-}_{\nu_{J_u}(x)}=\{z\in\R^d:\;\langle z,\nu_{J_u}(x)\rangle\leq0\}.
\end{equation*}
By exploiting the symmetries of the integrand in \eqref{final boss}, we get
\begin{equation*}
\begin{split}
    f(x,t)&=\int_{H_{\nu_{J_u}}^+}\langle z,\nu_{J_v}(x)\rangle e^{-\frac{|z|^2}{4}}(u(x-tz)-u^+(x))\de z +u^+(x)\int_{H_{\nu_{J_u}}^+}\langle z,\nu_{J_v}(x)\rangle e^{-\frac{|z|^2}{4}}\de z \\
    &+\int_{H_{\nu_{J_u}}^-}\langle z,\nu_{J_v}(x)\rangle e^{-\frac{|z|^2}{4}}(u(x-tz)-u^-(x))\de z +u^-(x)\int_{H_{\nu_{J_u}}^-}\langle z,\nu_{J_v}(x)\rangle e^{-\frac{|z|^2}{4}}\de z \\
    &=:A+B+C+D.
\end{split}
\end{equation*}
\newline
\emph{Claim:} $\lim_{t\to 0^+}A=\lim_{t\to 0^+}C=0$ for every $x\in J_u\cap J_v$.
\newline
First, let us consider the term $A$; in particular, the term $C$ can be treated analogously. We have 
\begin{equation*}
    \begin{split}
    A&=\int_{B_R\cap H_{\nu_{J_u}}^+}\langle z,\nu_{J_v}(x)\rangle e^{-\frac{|z|^2}{4}}(u(x-tz)-u^+(x))\de z\\
    &+\int_{B_R^\mathsf{c}\cap H_{\nu_{J_u}}^+}\langle z,\nu_{J_v}(x)\rangle e^{-\frac{|z|^2}{4}}(u(x-tz)-u^+(x))\de z.
\end{split}
\end{equation*}
By the definition of $u^+(x)$, the first integral on the right-hand side goes to zero as $t\to 0$. For the second integral, as in \eqref{Bound on g}, we have that for every $t\in (0,+\infty)$ it holds
\begin{equation*}
    \int_{B_R^\mathsf{c}\cap H_{\nu_{J_u}}^+}\langle z,\nu_{J_v}(x)\rangle e^{-\frac{|z|^2}{4}}(u(x-tz)-u^+(x))\de z\leq\|u\|_{L^\infty}\int_{B_R^\mathsf{c}}|z|e^{-|z|^2/4}\de z,
\end{equation*}
and therefore, it converges to $0$ as $R\to+\infty$. This proves the claim.\par

Now observe that, since for $\mathcal{H}^{d-1}$-a.e. $x\in J_u\cap J_v$ it holds
\begin{equation*}
    \nu_{J_v}(x)=\nu_{J_u}(x)\langle \nu_{J_u}(x),\nu_{J_v}(x)\rangle,
\end{equation*}
then we get
\begin{equation*}
    B= \frac{(4\pi)^{d/2}}{\sqrt{\pi}}u^+(x)\langle \nu_{J_u}(x),\nu_{J_v}(x)\rangle\quad\text{and}\quad D= -\frac{(4\pi)^{d/2}}{\sqrt{\pi}}u^-(x)\langle \nu_{J_u}(x),\nu_{J_v}(x)\rangle.
\end{equation*}
A straightforward application of dominated convergence finally gives 
\begin{equation*}
    \lim_{t\to 0^+}G_1(t)=-\frac{1}{\sqrt{\pi}}\int_{J_u\cap J_v}(u^+-u^-)(v^+-v^-)\nu_{J_u}\cdot\nu_{J_v}\de\mathcal{H}^{d-1},
\end{equation*}
which is the thesis.
\end{proof}

%%% ADDIO CARO REMARK, SEI STATO UNA CONSIDERAZIONE FEDELE, E SEPPUR LA TUA VITA è STATA BREVE, NOI NON CE NE SCORDEREMO! %%%
%\begin{rem}
    %We make a heuristic comment on the latter result. We took advantage of properties of ${\rm BV}$ functions to show that $H'(t)$ converges as $t\to0$. Actually, using real interpolation techniques, one can prove that $H'(t)$ stays bounded as $t\to0$ under the weaker hypothesis of
    %\begin{equation*}
    %u,v\in \left({\rm BV}(\mathbb{R}^d),L^\infty(\mathbb{R}^d)\right)_{1/2,2},
    %\end{equation*}
    %and it is reasonable to expect convergence as well. On an independent path, Tartar~\cite[Lect.~38]{Tar07} questioned if the Besov space $\mathcal{B}^{1/2,2}_\infty(\mathbb{R}^d)$ could be of utility for quasi-linear hyperbolic equations where some discontinuities occur, and then showed the inclusion
    %\begin{equation*}
        %\left({\rm BV}(\mathbb{R}^d),L^\infty(\mathbb{R}^d)\right)_{1/2,2}\subset \mathcal{B}^{1/2,2}_\infty(\mathbb{R}^d).
    %\end{equation*}
    %Now, one can show that $H'(t)$ doesn't necessarily converge as $t\to0$ under the further weaker hypothesis of $u,v\in \mathcal{B}^{1/2,2}_\infty(\mathbb{R}^d)$ (\textcolor{red}{See remark in the appendix}), and there arises the question of finding a space that satisfy the property in \eqref{Fourier transform of BV functions 2} and that is suitable for {\rm PDE}s. Though, this goes beyond the scope of the current work, and we plan to address such matter in the future.
%\end{rem}

Before proceeding with the proof of Theorem~\ref{Supreme main theorem}, we recall the integral Abelian-Tauberian theorem. The reader may find related proofs in the appendix of \cite{AHT18}.
\begin{teo}[Abelian-Tauberian]
 \label{Abelian Tauberian}
    Let $\nu$ be a $\sigma$-finite positive Borel measure and let $\gamma\in[0,+\infty)$, then it holds
    \begin{equation*}
        \lim_{t\to 0^+}t^\gamma\int_0^{+\infty} e^{-t\lambda}\de\nu(\lambda)=C\qquad\iff\qquad \lim_{a\to+\infty}\frac{\nu([0,a])}{a^\gamma}=\frac{C}{\Gamma(\gamma+1)}.
    \end{equation*}
\end{teo}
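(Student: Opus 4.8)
The plan is to prove the two implications separately: this is the integral form of Karamata's Tauberian theorem, and the forward (Abelian) direction is soft while the backward (Tauberian) one carries all the weight. I would set $F(a)=\nu([0,a])$, $\omega(t)=\int_0^{+\infty}e^{-t\lambda}\de\nu(\lambda)$ and $L=C/\Gamma(\gamma+1)$, and dispose first of the case $\gamma=0$, where both directions follow from monotone convergence (as $t\downarrow 0$ one gets $\omega(t)\uparrow\nu([0,+\infty))$, and as $a\uparrow+\infty$ one gets $F(a)\uparrow\nu([0,+\infty))$, and $\Gamma(1)=1$); so from now on assume $\gamma>0$.

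For the Abelian direction, assuming $F(a)/a^\gamma\to L$, I would write $e^{-t\lambda}=\int_\lambda^{+\infty}te^{-ts}\de s$ and use Fubini to get $\omega(t)=t\int_0^{+\infty}e^{-ts}F(s)\de s$; the substitution $s=x/t$ then gives $t^\gamma\omega(t)=\int_0^{+\infty}e^{-x}x^\gamma\,\frac{F(x/t)}{(x/t)^\gamma}\de x$. Since $F$ is non-decreasing with $F(\lambda)/\lambda^\gamma\to L$, one has $F(\lambda)\le M(1+\lambda^\gamma)$ for some $M$, so the integrand is dominated by $Me^{-x}(1+x^\gamma)$ while converging pointwise to $Le^{-x}x^\gamma$ as $t\downarrow0$; dominated convergence yields $t^\gamma\omega(t)\to L\,\Gamma(\gamma+1)=C$.

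For the Tauberian direction, assuming $t^\gamma\omega(t)\to C$, I would rescale: let $\sigma_t$ be the image of $t^\gamma\nu$ under $\lambda\mapsto t\lambda$, so that $\sigma_t([0,x])=t^\gamma F(x/t)$ and $\int e^{-s\lambda}\de\sigma_t(\lambda)=t^\gamma\omega(st)=s^{-\gamma}(st)^\gamma\omega(st)\to Cs^{-\gamma}$ for every $s>0$. Writing $\de\sigma_\infty(\lambda)=\frac{C}{\Gamma(\gamma)}\lambda^{\gamma-1}\de\lambda$, one checks $Cs^{-\gamma}=\int e^{-s\lambda}\de\sigma_\infty(\lambda)$ and $\sigma_\infty([0,1])=L$, and the conclusion $F(a)/a^\gamma\to L$ is precisely $\sigma_t([0,1])\to\sigma_\infty([0,1])$ (take $a=1/t$). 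Following Karamata, I would transfer the problem to the compact interval $[0,1]$: let $\mu_t$ be the image of the measure $e^{-\lambda}\de\sigma_t(\lambda)$ under $\lambda\mapsto e^{-\lambda}$, and $\mu_\infty$ the analogue for $\sigma_\infty$. Then $\int_0^1 x^k\de\mu_t(x)=\int e^{-(k+1)\lambda}\de\sigma_t(\lambda)\to C(k+1)^{-\gamma}=\int_0^1 x^k\de\mu_\infty(x)$ for every integer $k\ge0$; in particular $\mu_t([0,1])\to C$, so the total masses are uniformly bounded. By the Weierstrass approximation theorem this moment convergence promotes to $\int_0^1\phi\de\mu_t\to\int_0^1\phi\de\mu_\infty$ for all $\phi\in C([0,1])$, and then — sandwiching between continuous functions and using that $\mu_\infty$ is non-atomic — for every bounded $\phi$ whose discontinuity set is $\mu_\infty$-negligible. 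I would apply this with $\phi(x)=\tfrac{1}{x}\mathds{1}_{[e^{-1},1]}(x)$: this function is bounded on $[0,1]$ (equivalently, $\mathds{1}_{[0,1]}(\lambda)\le e\,e^{-\lambda}$ for $\lambda\ge 0$), its only discontinuity is at $x=e^{-1}$, and $\mu_\infty$ charges no point; since $\int_0^1\phi\de\mu_t=\int\mathds{1}_{[0,1]}(\lambda)\de\sigma_t(\lambda)=\sigma_t([0,1])$ and likewise $\int_0^1\phi\de\mu_\infty=\sigma_\infty([0,1])=L$, this gives $\sigma_t([0,1])\to L$, as desired.

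The main obstacle is, of course, the Tauberian direction, where pointwise convergence of Laplace transforms is only weak information. The crux is Karamata's device of multiplying by $e^{-\lambda}$ so that the rescaled measures become \emph{finite} measures on the compact interval $[0,1]$ — allowing ordinary polynomials (dense by Weierstrass) to serve as test functions — together with noticing that the quantity one wants, $\sigma_t([0,1])$, is the integral against $\mu_t$ of the specific bounded function $x^{-1}\mathds{1}_{[e^{-1},1]}$, which is bounded for exactly the reason that $\mathds{1}_{[0,1]}(\lambda)\le e\,e^{-\lambda}$. The remaining points are routine: the passage from continuous to bounded test functions (standard, as $\mu_\infty$ does not charge the single jump point), and, in the Abelian direction, the bound $F(\lambda)\lesssim 1+\lambda^\gamma$ needed for dominated convergence.
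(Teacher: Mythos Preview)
Your argument is correct: the Abelian direction via the Fubini representation $\omega(t)=t\int_0^\infty e^{-ts}F(s)\de s$ and dominated convergence is clean, and the Tauberian direction is the classical Karamata approach (rescaling, pushing forward by $\lambda\mapsto e^{-\lambda}$ to land on $[0,1]$, moment convergence plus Weierstrass, then testing against the bounded discontinuous function $x^{-1}\mathds{1}_{[e^{-1},1]}$). All the checks --- the domination $t^\gamma F(x/t)\le M(t^\gamma+x^\gamma)\le M(1+x^\gamma)$ for $t\le 1$, the identity $\int_0^1 x^k\de\mu_t=\int e^{-(k+1)\lambda}\de\sigma_t\to C(k+1)^{-\gamma}$, the non-atomicity of $\mu_\infty$ for $\gamma>0$, and the unwinding $\int\phi\de\mu_t=\sigma_t([0,1])$ --- go through as you describe.

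As for comparison with the paper: there is nothing to compare. The paper does not prove this theorem; it merely recalls the statement and sends the reader to the appendix of \cite{AHT18}. Your write-up is therefore strictly more than what the paper offers, and is a standard and complete proof of the quoted result.
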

In the proof of Corollary~\ref{Main corollary 1}, we use a special one-sided version of the latter theorem, in which the existence of the limit is not required. Again, we refer to the appendix of \cite{AHT18}.
\begin{teo}
\label{limsup Abelian Tauberian}
Let $\nu$ be a $\sigma$-finite positive Borel measure and let $\gamma\in[0,+\infty)$. If 
\begin{equation*}
    \limsup_{a\to\infty}\frac{\nu([0,a])}{a^\gamma}\leq C<+\infty
\end{equation*}
for some $C\geq 0$, then
\begin{equation*}
    \limsup_{t\to 0^+}t^{\gamma}\int_0^\infty e^{-tx}d\nu(x)\leq C\Gamma(\gamma+1).
\end{equation*}
\end{teo}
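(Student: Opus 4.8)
The plan is to reduce the Laplace transform of $\nu$ to a weighted integral of its distribution function $F(a)\eqdef\nu([0,a])$, and then to split that integral at a large threshold beyond which the hypothesis supplies the pointwise bound $F(a)\le(C+\eps)a^\gamma$. Note first that, since $\nu$ is $\sigma$-finite and the $\limsup$ hypothesis forces $F(a)<+\infty$ for large $a$ (hence for every $a$, by monotonicity of $F$), all the quantities below are well defined, and the Laplace integral is finite for each $t>0$.

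The first step is to use the elementary identity $e^{-tx}=\int_x^{\infty}te^{-ts}\de s$ together with Tonelli's theorem (the integrand is nonnegative and both measures are $\sigma$-finite) to write
\begin{equation*}
    \int_0^{\infty}e^{-tx}\de\nu(x)=\int_0^{\infty}te^{-ts}\,F(s)\de s .
\end{equation*}
Given $\eps>0$, I would then choose $A=A(\eps)$ so that $F(s)\le(C+\eps)s^\gamma$ for all $s\ge A$, put $M\eqdef F(A)$, multiply the identity above by $t^\gamma$, and split $\int_0^{\infty}=\int_0^{A}+\int_A^{\infty}$. The near-origin piece is controlled by
\begin{equation*}
    t^\gamma\int_0^{A}te^{-ts}F(s)\de s\le M\,t^\gamma\bigl(1-e^{-tA}\bigr),
\end{equation*}
which tends to $0$ as $t\to0^+$ (if $\gamma>0$ because $t^\gamma\to0$, and if $\gamma=0$ because $1-e^{-tA}\to0$). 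For the tail piece I would bound $F(s)$ by $(C+\eps)s^\gamma$, enlarge the domain of integration back to $(0,\infty)$, and use $\int_0^{\infty}e^{-ts}s^\gamma\de s=\Gamma(\gamma+1)t^{-\gamma-1}$ (substitution $u=ts$) to obtain
\begin{equation*}
    t^\gamma\int_A^{\infty}te^{-ts}F(s)\de s\le(C+\eps)\,t^{\gamma+1}\int_0^{\infty}e^{-ts}s^\gamma\de s=(C+\eps)\,\Gamma(\gamma+1).
\end{equation*}

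Adding the two estimates and passing to the $\limsup$ as $t\to0^+$ yields $\limsup_{t\to0^+}t^\gamma\int_0^{\infty}e^{-tx}\de\nu(x)\le(C+\eps)\Gamma(\gamma+1)$; letting $\eps\to0$ gives the claim. I do not expect a genuine obstacle here: this is the Abelian (elementary) half of the Abelian--Tauberian correspondence, now in a one-sided $\limsup$ form. The only point deserving attention is that, unlike in the two-sided statement (Theorem~\ref{Abelian Tauberian}) where one has a genuine limit, we cannot feed the full integral into a dominated-convergence argument, since $F$ is controlled only from above and only asymptotically; splitting at $A$ and estimating the tail by the exact Gamma integral is precisely what circumvents this, and it is also where the assumption $\gamma\ge0$ is used, to make the near-origin piece negligible.
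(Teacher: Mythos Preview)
Your proof is correct. The paper does not actually give its own proof of this statement: it merely records it as a tool and refers the reader to the appendix of \cite{AHT18}. Your argument is the standard Abelian one (rewrite the Laplace transform via Fubini as $t\int_0^\infty e^{-ts}F(s)\de s$, split at a threshold $A$ beyond which $F(s)\le(C+\eps)s^\gamma$, and use the exact Gamma integral on the tail), and every step is sound, including the check that $F$ is everywhere finite and that the near-origin piece vanishes also when $\gamma=0$.
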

Finally, we have gathered all the tools to prove Theorem~\ref{Supreme main theorem}.

\begin{proof}[Proof of Theorem~\ref{Supreme main theorem}]
First, we shall prove \eqref{Fourier transform of BV functions 2}.
Let $u$ be as in the assumption, and similarly to \eqref{Relative BV heat content}, consider the quantity  
\begin{equation*}
    H(t)=\int_{\R^d}(T(t^2)u)(x)u(x)\de x.
\end{equation*}
    By Plancherel theorem and by the explicit expression of the heat kernel in \eqref{Heat kernel}, we get 
\begin{equation*}
    H(t) = \int_{\R^d}\mathcal{F}\{T(t^2)u\}(\xi)\overline{\mathcal{F}\{u\}}(\xi)\de\xi = \int_{\R^d}e^{-4\pi^2|\xi|^2t^2}|\hat{u}|^2(\xi)\de\xi.
\end{equation*}
Observe that $t\mapsto H(t)$ is continuous in $[0,1]$, and by Plancherel theorem and dominated convergence, it follows that $H(0)=\|u\|_{L^2}$. Again, by dominated convergence, we obtain
\begin{equation*}
    H^{\prime}(t)=-8\pi^2t\int_{\R^d}|\xi|^2e^{-4\pi^2|\xi|^2t^2}|\hat{u}|^2(\xi)\de\xi,
\end{equation*}
and in particular, $H^{\prime}$ is continuous on $(0,1]$. Therefore, thanks to Proposition~\ref{Technical proposition}, we get  
\begin{equation*}
    t\int_{\R^d}|\xi|^2e^{-4\pi^2|\xi|^2t^2}|\hat{u}|^2(\xi)\de\xi\to\frac{\mathcal{J}(u)}{8\pi^{\frac{5}{2}}}.
\end{equation*}
Now, let us convert to polar coordinates and obtain
\begin{equation*}
    t\int_0^{+\infty} e^{-4\pi^2r^2t^2}r^{n+1}g(r)\de r \to\frac{\mathcal{J}(u)}{8\pi^{\frac{5}{2}}},
\end{equation*}
where we have defined
\begin{equation*}
    g(r)\coloneq\int_{\mathbb{S}^{d-1}}|\hat{u}|^2(rv)\de\sigma(v).
\end{equation*}
Let us write $\delta \coloneq 4\pi^2t^2$, and then, by the change of variable $r^2=s$, we get
\begin{equation}
\label{almost done}
    \int_0^{+\infty} e^{-\delta s}s^{\frac{n}{2}}g(\sqrt{s})\de s\sim \frac{\mathcal{J}(u)}{2\pi^{\frac{3}{2}}\sqrt{\delta}}\quad\text{as}\quad s\to0^+,
\end{equation}
where we use the symbol “$\sim$” to denote the asymptotic behaviour. Now, consider
\begin{equation*}
    \Phi(s)\coloneq\int_0^s z^{\frac{n}{2}}g(\sqrt{z})\de z=2\int_0^{\sqrt{s}}y^{n+1}g(y)\de y = 2\int_{B_{\sqrt{s}}}|\xi|^2|\hat{u}|^2(\xi)\de\xi.
\end{equation*}
With the previous definition in mind, we can write \eqref{almost done} as 
\begin{equation*}
    \int_0^{+\infty} e^{-\delta s}\de \Phi(s)\sim\frac{\mathcal{J}(u)}{2\pi^{\frac{3}{2}}\sqrt{\delta}} \quad\text{as}\quad \delta\to0^+,
\end{equation*}
Finally, we apply Theorem~\ref{Abelian Tauberian} and obtain 
\begin{equation*}
    \Phi(R)\sim \sqrt{R}\frac{\mathcal{J}(u)}{\pi^2}\quad\text{as}\quad R\to+\infty,
\end{equation*}
which (by replacing $R$ with $R^2$) means that 
\begin{equation}
    \lim_{R\to+\infty}\frac{1}{R}\int_{B_R}|\xi|^2|\hat{u}|^2(\xi)\de\xi=\frac{\mathcal{J}(u)}{2\pi^2}.
\end{equation}\par
In order to prove \eqref{Fourier transform of BV functions 1}, we shall proceed by polarization. Again, let $u$ and $v$ satisfy our hypotheses. We exploit the fact that 
\begin{equation*}
    \Re\{\hat{u}(\xi)\overline{\hat{v}}(\xi)\}=\frac{|\widehat{(u+v)}(\xi)|^2}{2}-\frac{|\hat{u}(\xi)|^2}{2}-\frac{|\hat{v}(\xi)|^2}{2}.
\end{equation*}
Indeed, by the latter equation and by \eqref{Fourier transform of BV functions 2}, we obtain
\begin{equation}
\label{Final integrals}
    \begin{split}
     \lim_{R\to+\infty}\frac{1}{R}\int_{B_R}|\xi|^2\hat{u}(\xi)\overline{\hat{v}}(\xi)\de\xi&=\frac{1}{4\pi^2}\left( \int_{J_{u+v}}((u+v)^{+}-(u+v)^{-})^2\de\mathcal{H}^{d-1} \right.\\
     &\left.-\int_{J_u}(u^{+}-u^{-})^2\de\mathcal{H}^{d-1}-\int_{J_v}(v^{+}-v^{-})^2\de\mathcal{H}^{d-1}\right).
\end{split}
\end{equation}
Now, we write 
\begin{equation*}
    \begin{split}
    J_{u+v}&=(J_u\setminus J_v)\cup(J_v\setminus J_u)\cup(J_u\cap J_v), \\
    J_u&=(J_u\setminus J_v)\cup(J_u\cap J_v), \\
    J_v&=(J_v\setminus J_u)\cup(J_u\cap J_v),
\end{split}
\end{equation*}
 and we split the integrals at the right-hand side of \eqref{Final integrals} accordingly. In particular, we observe that if $x\in J_u\cap J_v$, then it holds
 \begin{equation*}
     (u+v)^{\underset{-}{+}}(x)=u^{\underset{-}{+}}(x)+v^{\underset{-}{+}}(x).
 \end{equation*}Finally, a simple rearrangement of the terms gives \eqref{Fourier transform of BV functions 1}.
\end{proof} 

Now, we proceed with the proof of the first corollary to Theorem~\ref{Supreme main theorem}.

\begin{proof}[Proof of Corollary~\ref{Error in Plancherel corollary}]
    Before performing the limit in \eqref{Fourier transform of BV functions 2}, multiply by $R^{-3}$ both sides and then integrate with respect to $R$ from $L>0$ to $+\infty$, so to get
\begin{equation*}
    \int_L^\infty R^{-3} \int_{B_R}|\xi|^2|\hat{u}|^2(\xi)\de\xi\sim \frac{\mathcal{J}(u)}{2\pi^2L}\quad\text{as}\quad L\to+\infty.
\end{equation*}
Now, we integrate by parts (justified by \eqref{Fourier transform of BV functions 2}) the left-hand side above and obtain
\begin{equation*}
    \frac{1}{2L^2}\int_{B_L}|\xi|^2|\hat{u}|^2(\xi)\de\xi+\frac{1}{2}\int_{B_L^\mathsf{c}}|\hat{u}|^2(\xi)\de\xi\sim  \frac{\mathcal{J}(u)}{2\pi^2L} \quad\text{as}\quad L\to+\infty.
\end{equation*}
Finally, by applying \eqref{Fourier transform of BV functions 2} a second time, we end up with 
\begin{equation*}
    \int_{B_L^\mathsf{c}}|\hat{u}|^2(\xi)\de\xi\sim  \frac{\mathcal{J}(u)}{2\pi^2L}\quad\text{as}\quad L\to+\infty,
\end{equation*}
which is indeed \eqref{Intro result 2}.
\end{proof}

We proceed with the proof of the second and last corollary, in which we present a newfound characterization of sets of finite perimeter.

\begin{proof}[Proof of Corollary~\ref{Main corollary 1}]
The relations \eqref{Intro result mixed} and \eqref{Intro Result} easily follow from Theorem~\ref{Supreme main theorem} and the definition of reduced boundary. Hence, let us suppose that \eqref{Charact. of sets of finite perimeter} holds. By Theorem \ref{limsup Abelian Tauberian}, we have
\begin{equation*}
    \limsup_{\delta\to 0^+}\sqrt{\delta}\int_0^{+\infty} e^{-\delta s}\de\Phi(s)<+\infty,
\end{equation*}
with (we shall neglect the constants)
\begin{equation*}
    \Phi(s)=\int_0^s z^{\frac{n}{2}}g(\sqrt{z})\de z\quad\text{and}\quad g(r)=\int_{\mathbb{S}^{d-1}}|\widehat{\mathds{1}}_\Omega|^2(rv)\de v.
\end{equation*}
By a change of variables, we obtain
\begin{equation*}
    \limsup_{t\to 0^+}t\int_{\R^d}e^{-|\xi|^2t^2}|\xi|^2|\widehat{\mathds{1}}_\Omega|^2(\xi)\de\xi<+\infty.
\end{equation*}
By Plancherel theorem, the latter inequality implies that
\begin{equation*}
    \limsup_{t\to 0^+}|H^\prime(t)|<+\infty,
\end{equation*}
where we intend $H^\prime$ as in \eqref{Derivative of H} with $u=v=\mathds{1}_\Omega$. However, by the mean value theorem and by the monotonicity properties of $H$, it follows that
\begin{equation}
\label{Diff at zero}
    0\leq\limsup_{t\to 0}\frac{H(0)-H(t)}{t}<+\infty.
\end{equation}
Exploiting the definition of $H$, we turn \eqref{Diff at zero} into
\begin{equation*}
    \limsup_{t\to 0^+}\frac{1}{t}\int_{\Omega^\mathsf{c}} \big(T(t^2)\mathds{1}_\Omega\big)(x)\de x <+\infty,
\end{equation*}
and we can conclude by Theorem~\ref{Pallara heat implies finite perimeter}. Last, the converse implication is included in Theorem~\ref{Supreme main theorem}.
\end{proof}

\section{Fourier transform of derivative of sets}
\label{Fourier Sets}
We shall write $\Omega_r$ to denote the set
\begin{equation*}
    \Omega_r=\{x\in\R^d:\;{\rm d}(\Omega,x)\leq r\},
\end{equation*}
which coincides with the Minkowski sum $\Omega+B_r$. We have the following definition.

\begin{defn}[outer Minkowski content]
\label{Outer Minkowski content}
We say that a Borel set $\Omega\subseteq\R^d$ has a finite outer Minkowski content if the limit
\begin{equation*}
    \mathcal{S}\mathcal{M}(\Omega)=\lim_{r\to 0^+}\frac{|\Omega_r\setminus\Omega|}{r}
\end{equation*}
exists and is finite.
\end{defn}

Moreover, we give the following notion of convergence for sequences of measures.
\begin{defn}
    We say that a sequence of measures $(\mu_n)_{n\in\N}\subset\mathcal{M}(\R^d)$ weakly converges to a measure $\mu\in\mathcal{M}(\R^d)$, if for every $\varphi\in\mathcal{C}_b(\R^d)$ it holds 
    \begin{equation*}
    \lim_{n\to+\infty}\int_{\R^d}\varphi\de\mu_n=\int_{\R^d}\varphi\de\mu.
    \end{equation*}
\end{defn}

The following result is essential to obtain Theorem~\ref{Main theorem 2}, and it is a refinement of \cite[Prop.~2]{ACV08}.

\begin{prop}
\label{Prop ambrosio refined}
    Let $\Omega\subset\R^d$ be a set of finite measure and perimeter. Then $\mathcal{S}\mathcal{M}(\Omega)={\rm Per}(\Omega)$ if and only if the measures 
    \begin{equation}
    \label{Measures mu}
        \mu_h\coloneq\frac{\mathds{1}_{\Omega+hB_1}-\mathds{1}_{\Omega}}{h}
    \end{equation}
    weakly converge to $|D\mathds{1}_\Omega|$.
\end{prop}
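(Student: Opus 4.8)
The plan is to prove the two implications of Proposition~\ref{Prop ambrosio refined} by exploiting the decomposition of the measures $\mu_h$ into their total variation (mass) and their ``direction''. First I would observe that for every $h>0$ one has $\mu_h \geq 0$ and $\mu_h(\R^d) = |\Omega_h \setminus \Omega|/h$, so that $\mathcal{S}\mathcal{M}(\Omega)$ existing and being finite is exactly the statement that the total masses $\mu_h(\R^d)$ converge (to $\mathcal{S}\mathcal{M}(\Omega)$) as $h\to 0^+$. On the other hand, I would recall from \cite[Prop.~2]{ACV08} (or reprove directly by testing against $\varphi\in\mathcal{C}_c^\infty$ and integrating by parts) that for any $\Omega$ of finite perimeter the vector measures $\mu_h^{\mathrm{vec}} \coloneq (\mathds{1}_{\Omega+hB_1}-\mathds{1}_\Omega)\nu/h$ — more precisely, the measures one gets by testing with $\mathcal{C}^1$ vector fields against $D\mathds{1}_{\Omega+hB_1}$-type quantities — always converge weakly, with the limit carrying mass exactly $\mathrm{Per}(\Omega)$. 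The cleanest route is: $\mu_h \rightharpoonup |D\mathds{1}_\Omega|$ is equivalent to (i) $\mu_h \rightharpoonup \lambda$ for some $\lambda$ and (ii) $\lambda = |D\mathds{1}_\Omega|$, and one always has a liminf inequality $\lambda(\R^d) \le \liminf_h \mu_h(\R^d)$ together with, from \cite{ACV08}, $\mathrm{Per}(\Omega) \le \mathcal{S}\mathcal{M}(\Omega)$ whenever the latter exists.

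For the implication ``$\mathcal{S}\mathcal{M}(\Omega)=\mathrm{Per}(\Omega)$ $\Rightarrow$ weak convergence'', I would argue as follows. Since $\Omega$ has finite perimeter, a compactness argument (the $\mu_h$ are uniformly bounded in mass once $\mathcal{S}\mathcal{M}(\Omega)<\infty$, being nonnegative with masses converging to $\mathcal{S}\mathcal{M}(\Omega)$) gives that every subsequence has a further subsequence weakly-$*$ converging to some nonnegative $\lambda \in \mathcal{M}(\R^d)$. The key point is to identify $\lambda$: testing $\mu_h$ against gradients $\nabla\varphi$ of test functions and using that $\Omega_h \downarrow \overline\Omega$, $\mathds{1}_{\Omega_h} \to \mathds{1}_\Omega$ in $L^1$ (finite measure!), one shows that $D\mathds{1}_{\Omega_h} \rightharpoonup D\mathds{1}_\Omega$ and hence the ``vectorial'' version $\mu_h \nu_{\Omega_h} $ converges weakly to $|D\mathds{1}_\Omega|\nu_\Omega = D\mathds{1}_\Omega$; comparing masses, $\lambda(\R^d) \ge |D\mathds{1}_\Omega(\R^d)\text{-pairing}|$ only gives $\lambda \ge$ something, so instead I use: $\lambda(\R^d) = \lim_h \mu_h(\R^d) = \mathcal{S}\mathcal{M}(\Omega) = \mathrm{Per}(\Omega) = |D\mathds{1}_\Omega|(\R^d)$, while simultaneously $\lambda \ge |D\mathds{1}_\Omega|$ as measures (from lower semicontinuity of total variation under the vectorial weak convergence, since the absolutely continuous decomposition of $\mu_h\nu_{\Omega_h}$ w.r.t. $\mu_h$ forces $\lambda \ge |\,\text{weak-}\lim D\mathds{1}_{\Omega_h}| = |D\mathds{1}_\Omega|$). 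Two nonnegative measures with $\lambda \ge |D\mathds{1}_\Omega|$ and equal total mass must coincide, so $\lambda = |D\mathds{1}_\Omega|$. Since the limit is independent of the subsequence, the whole family converges: $\mu_h \rightharpoonup |D\mathds{1}_\Omega|$.

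For the converse, ``weak convergence $\Rightarrow$ $\mathcal{S}\mathcal{M}(\Omega) = \mathrm{Per}(\Omega)$'', I would note that weak convergence $\mu_h \rightharpoonup |D\mathds{1}_\Omega|$ does \emph{not} by itself give convergence of masses (weak convergence only gives $\liminf \mu_h(\R^d) \ge |D\mathds{1}_\Omega|(\R^d)$), so I need to upgrade it. The trick is to test against $\varphi \equiv 1$ approximated from below by $\mathcal{C}_c$ functions $\varphi_k \uparrow 1$: then $\limsup_h \mu_h(\R^d) \ge \lim_h \int \varphi_k \de\mu_h = \int \varphi_k \de|D\mathds{1}_\Omega|$, but for the reverse bound I exploit that $\mu_h$ is supported in a fixed bounded neighbourhood of $\overline\Omega$ for $h$ small (as $\Omega$ — being of finite measure and perimeter — has, up to the good representative, bounded essential support? — no, this is false in general). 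The correct and robust argument: since $\mathds{1}_{\Omega_h} \to \mathds{1}_\Omega$ in $L^1(\R^d)$ and $\sup_h \mu_h(\R^d) \le \mathrm{something}$ is \emph{not} a priori available here, I instead use that weak convergence of $\mu_h$ to a finite measure forces $\sup_h \mu_h(\R^d) < \infty$ (uniform boundedness / Banach–Steinhaus on $\mathcal{C}_0$), hence $\mathcal{S}\mathcal{M}(\Omega) \coloneq \lim_h \mu_h(\R^d)$ — wait, the limit need not exist. So the final structure is: weak convergence $\Rightarrow$ $\sup_h \mu_h(\R^d) < \infty$ and $\liminf_h \mu_h(\R^d) \ge \mathrm{Per}(\Omega)$; combined with the general inequality $\limsup_h \mu_h(\R^d) \le \mathcal{S}\mathcal{M}$-type bound — here I invoke that $\mathrm{Per}(\Omega) \le \liminf_h |\Omega_h\setminus\Omega|/h$ always and, using the weak convergence together with a \emph{recovery} test function (a smooth cutoff equal to $1$ on a large ball containing most of the mass, chosen via tightness of the limit measure), $\limsup_h |\Omega_h\setminus\Omega|/h \le \mathrm{Per}(\Omega)$, so the limit exists and equals $\mathrm{Per}(\Omega) = \mathcal{S}\mathcal{M}(\Omega)$. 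I expect \textbf{this mass-convergence upgrade in the converse direction to be the main obstacle}: weak convergence is blind to mass escaping to infinity, and ruling that out requires either tightness of $|D\mathds{1}_\Omega|$ (immediate, it is a finite measure) combined with a careful choice of test functions that pinch the masses from above, or a direct geometric estimate on $|\Omega_h \setminus \Omega|$ far from a large ball — I would reconcile these by first proving, via $L^1$-convergence $\mathds{1}_{\Omega_h}\to\mathds{1}_\Omega$ and the coarea/slicing structure, that the mass of $\mu_h$ outside a large ball $B_\rho$ is controlled by $|D\mathds{1}_\Omega|(\R^d \setminus B_{\rho/2}) + o(1)$, which is small uniformly in $h$, thereby legitimately passing from weak convergence to convergence of total masses.
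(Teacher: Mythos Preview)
Your proposal has two genuine problems, one in each direction.

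\textbf{Converse direction.} The paper explicitly defines weak convergence of measures as convergence against every $\varphi\in\mathcal{C}_b(\R^d)$, not merely $\mathcal{C}_0(\R^d)$. Since the constant function $1$ lies in $\mathcal{C}_b(\R^d)$, the hypothesis $\mu_h\rightharpoonup|D\mathds{1}_\Omega|$ tested against $\varphi\equiv 1$ gives $\mu_h(\R^d)=|\Omega_h\setminus\Omega|/h\to|D\mathds{1}_\Omega|(\R^d)=\mathrm{Per}(\Omega)$ immediately, which is exactly $\mathcal{S}\mathcal{M}(\Omega)=\mathrm{Per}(\Omega)$. Your whole discussion of tightness, mass escaping to infinity, Banach--Steinhaus, and recovery cutoffs is solving a problem that is not there; it would be relevant only under weak-$*$ convergence, which is not what is being assumed.

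\textbf{Forward direction.} Your key step is to show that any subsequential limit $\lambda$ of $\mu_h$ satisfies $\lambda\ge|D\mathds{1}_\Omega|$, and here the argument is not well-posed. You invoke ``the vectorial version $\mu_h\nu_{\Omega_h}$'' and the convergence $D\mathds{1}_{\Omega_h}\rightharpoonup D\mathds{1}_\Omega$, but these are different objects: $D\mathds{1}_{\Omega_h}$ is a surface measure concentrated on $\partial^*\Omega_h$, while $\mu_h$ is a \emph{bulk} measure supported on $\Omega_h\setminus\Omega$, and $\nu_{\Omega_h}$ is simply not defined there. The (true, easy) convergence $D\mathds{1}_{\Omega_h}\rightharpoonup D\mathds{1}_\Omega$ says nothing about $\mu_h$. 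A vectorial route \emph{can} be made to work---take $\vec{\mu}_h\coloneq-\nabla d_\Omega\,\mu_h$, so $|\vec{\mu}_h|=\mu_h$ since $|\nabla d_\Omega|=1$ a.e.\ on $\Omega^{\mathsf c}$, and then prove $\vec{\mu}_h\rightharpoonup D\mathds{1}_\Omega$---but you have not done this, and it is not obviously shorter than the paper's argument.

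The paper avoids subsequences and vectorial constructions altogether. It invokes Portmanteau's theorem: given that $\mu_h(\R^d)\to\mathrm{Per}(\Omega)=|D\mathds{1}_\Omega|(\R^d)$ by hypothesis, weak convergence in $\mathcal{C}_b$ follows once one checks $\liminf_h\mu_h(U)\ge|D\mathds{1}_\Omega|(U)$ for every open $U$. This liminf inequality comes from the coarea formula applied to the Lipschitz function $d_\Omega$, which yields
\[
\mu_h(U)=\int_0^1\mathrm{Per}\big(\{d_\Omega>hr\};U\big)\de r,
\]
and then Fatou's lemma together with lower semicontinuity of the perimeter (since $\{d_\Omega>hr\}\to\Omega^{\mathsf c}$ locally in measure) gives $\liminf_h\mu_h(U)\ge\mathrm{Per}(\Omega^{\mathsf c};U)=|D\mathds{1}_\Omega|(U)$.
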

\begin{proof}
 First, let us assume that
 \begin{equation*}
     \mathcal{S}\mathcal{M}(\Omega)={\rm Per}(\Omega)<+\infty.
 \end{equation*}
 By Portmanteau theorem (see \cite[Thm.~8.2.3]{Bog07}), and since the assumption implies that
 \begin{equation*}
     \lim_{h\to 0^+}\mu_h(\R^d)={\rm Per}(\Omega),
 \end{equation*}
 then it is enough to show that
 \begin{equation*}
     \liminf_{h\to 0^+}\mu_h(U)\geq|D\mathds{1}_\Omega|(U)\quad\text{for every open set }U.
 \end{equation*}
 Therefore, consider an open set $U$ and define the distance function
 \begin{equation*}
     {\rm d}_\Omega(x)\coloneq\inf\{|x-y|:\;y\in\Omega\}.
 \end{equation*}
 It is clear that such a function is Lipschitz continuous, and as in \cite[p.~11]{AmDan00}, we have that
 \begin{equation*}
     |\nabla{\rm d}_\Omega|=1\quad \leb^d\text{-almost everywhere in }\Omega^\mathsf{c}.
 \end{equation*}
 Given the previous observations, we apply Coarea formula to obtain
 \begin{equation*}
     \mu_h(U)=\frac{1}{h}\underset{\{0<\de_{\Omega}<h\}}{\int}\mathds{1}_U|\nabla\de_\Omega|\de\mathcal{H}^d=\frac{1}{h}\int_0^h{\rm Per}(\{{\rm d}_\Omega>t\};U)\de t=\int_0^1{\rm Per}(\{{\rm d}_\Omega>hr\};U)\de r.
 \end{equation*}
 Since the sets $\{{\rm d}_\Omega>th\}$ converge locally in measure to $\{{\rm d}_\Omega>0\}=\Omega^\mathsf{c}$ as $h\to 0^+$, then, by the lower semicontinuity of the perimeter (see \cite[Prop.~3.38]{AFP00}) and by Fatou lemma, we get 
 \begin{equation*}
     \liminf_{h\to 0^+}\mu_h(U)\geq{\rm Per}(\Omega^\mathsf{c};U)={\rm Per}(\Omega;U)=|D\mathds{1}_\Omega|(U).
 \end{equation*}
 This proves the first part of the result.
 \par
 Conversely, let $\mu_h$ be weakly convergent to $|D\mathds{1}_\Omega|$. Then this implies the boundedness of the sequence, and moreover, we have that
 \begin{equation*}
     \mu_h(\R^d)\to|D\mathds{1}_\Omega|(\R^d)={\rm Per}(\Omega),
 \end{equation*}
 which gives the thesis. 
\end{proof}

In light of the latter proposition, we give a short proof of Theorem~\ref{Main theorem 2}.
\begin{proof}[Proof of Theorem~\ref{Main theorem 2}]

Notice that the function
\begin{equation*}
    x\mapsto e^{-2\pi i\xi\cdot x}
\end{equation*}
is bounded and continuous for all $\xi\in\R^d$. If the measures $\mu_h$ (defined as in \eqref{Measures mu}) converge in duality with $\mathcal{C}_b(\R^d)$ to $|D\mathds{1}_\Omega|$, then we have 
\begin{equation*}
    \lim_{h\to 0^+}\int_{\R^d}e^{-2\pi i\xi\cdot x}\de\mu_h(x)=\int_{\R^d}e^{-2\pi i\xi\cdot x}\de|D\mathds{1}_\Omega|(x),
\end{equation*}
which is equivalent to \eqref{Derivative Fourier transform}. The converse implication is trivial since one can test the pointwise convergence of the Fourier transforms at $\xi=0$, easily implying the thesis.
\end{proof}

\section{Discrepancy of {\rm BV} functions}
\label{Fourier Discrepancy}

We now turn our attention to the quadratic discrepancy and prove Theorem~\ref{Upper} and Theorem~\ref{Lower}. Let us start by exploiting the convolutional structure of \eqref{Dis1}. Namely, if we let $\mu_{\rm L}$ be the Lebesgue measure on $\mathbb{T}^d$, and for ${p}\in\mathbb{T}^d$ we consider $\mu_{\rm D}({p})$ to be the Dirac delta centered at ${p}$, then by setting
\begin{equation*}
    \tilde{\mu}=\sum_{{p}\in\mathcal{P}_N}\mu_{\rm D}(-{p})-N\mu_{\rm L},
\end{equation*}
for a function $u\in L^1(\mathbb{R}^d)$, we may rewrite \eqref{Dis1} as
\begin{equation*}
\mathcal{D}(u;\mathcal{P}_N)=\int_{\mathbb{T}^d}\mathfrak{P}\{u\}({x}) \de \tilde{\mu}(-{x})=(\mathfrak{P}\{u\}\ast\tilde{\mu})(\mathbf{0}).
\end{equation*}
Now, for $f\in L^1(\mathbb{T}^d)$ or $f\in\mathcal{M}(\mathbb{T}^d)$, let $\tilde{\mathcal{F}}\{f\}\colon \mathbb{Z}^d\to\mathbb{C}$ be the function of the Fourier coefficients of $f$. Then, it is not difficult to see that for $u\in L^1(\mathbb{R}^d)$, and for every ${n}\in\mathbb{Z}^d$, it holds
\begin{equation*}
    \tilde{\mathcal{F}}\circ \mathfrak{P}\{u\}({n})=\mathcal{F}\{u\}({n}).
\end{equation*}
Moreover, by classic properties of the Fourier transform, it holds
\begin{equation}\label{FourierProp}
    \mathcal{F}\left\{ [\mathbf{0},\delta,\rho]u\right\}=\delta^{d}\,[\mathbf{0},\delta^{-1},\rho^{-1}]\mathcal{F}\left\{u\right\}.
\end{equation}
Therefore, by applying Plancherel theorem on $\mathbb{T}^d$, we get
\begin{equation*}
\begin{split}
    \int_{\mathbb{T}^d}\left|\mathcal{D}([\tau,\delta,\rho]u; \mathcal{P}_N)\right|^2\de\tau&=\int_{\mathbb{T}^d}\left|(\mathfrak{P}\{[\mathbf{0},\delta,\rho]u\} \ast \tilde{\mu})\right|^2(\tau)\de\tau\\
    &=\sum_{{n}\in\mathbb{Z}^d}\left|\tilde{\mathcal{F}}\{\tilde{\mu}\}({n})\right|^2\left|\tilde{\mathcal{F}}\circ \mathfrak{P}\{[\mathbf{0},\delta,\rho]u\}({n})\right|^2\\
    %&=\sum_{{n}\in\mathbb{Z}^d}\left|\mathcal{F}\{\tilde{\mu}\}({n})\right|^2\left|\mathcal{F}\{[\mathbf{0},\delta,\rho]u\}({n})\right|^2\\
    &=\sum_{{n}\in\mathbb{Z}_*^d}\left|\sum_{{p}\in\mathcal{P}_N}e^{2 \pi i {p}\cdot{n}}\right|^2\left|\mathcal{F}\{[\mathbf{0},\delta,\rho]u\}({n})\right|^2,
\end{split}
\end{equation*}
where, for the sake of notation, we have set $\mathbb{Z}^d_*=\mathbb{Z}^d\setminus\{\mathbf{0}\}$. Finally, by \eqref{FourierProp} and the symmetry of ${\rm SO}(d)$, we may rewrite \eqref{Dis2} as
\begin{equation*}
    \mathcal{D}_2(u;\mathcal{P}_N)=\sum_{{n}\in\mathbb{Z}_*^d}\left|\sum_{{p}\in\mathcal{P}_N}e^{2 \pi i {p}\cdot{n}}\right|^2\int_{{\rm SO}(d)}\int_{0}^{1}\delta^{2d}\left|[\mathbf{0},\delta^{-1},\rho]\mathcal{F}\{u\}({n})\right|^2\de\delta\de\rho,
\end{equation*}
and further, by a change of variable, we get
\begin{equation}\label{Fourier Series}
\begin{split}
    \mathcal{D}_2(u;\mathcal{P}_N)= \sum_{{n}\in\mathbb{Z}_*^d}\left|\sum_{{p}\in\mathcal{P}_N}e^{2 \pi i {p}\cdot{n}}\right|^2|{n}|^{-2d-1}\int_{B_{|{n}|}}|\xi|^{d+1}|\hat{u}|^2(\xi)\de\xi.
\end{split}
\end{equation}
The rest of the section relies on the following asymptotic relation resulting from integrating by parts in \eqref{Fourier transform of BV functions 2}. Namely, it holds
\begin{equation}\label{Fundamental Asymptotic}
    \int_{B_{|{n}|}}|\xi|^{d+1}|\hat{u}|^2(\xi)\de\xi\sim\frac{{|{n}|^{d}\, \mathcal{J}}(u)}{2d\pi^2}\quad\text{as}\quad n\to\infty.
\end{equation}
Hence, let us start by proving the upper bound in Theorem~\ref{Upper}.

\begin{proof}[Proof of Theorem~\ref{Upper}]
    We write $c_i=c_i(d)$ with $i\in\mathbb{N}$ to denote generic dimensional constants throughout the proof. By \eqref{Fundamental Asymptotic}, there exists $n_0\in\mathbb{N}$ be such that for every ${n}\geq n_0$, it holds
\begin{equation*}
    \int_{B_{|{n}|}}|\xi|^{d+1}|\hat{u}|^2(\xi)\de\xi\leq\frac{{|{n}|^{d}\, \mathcal{J}}(u)}{d\pi^2}.
\end{equation*}
First, we prove the upper bound for all positive integers $N$ such that $N^{1/d}\geq n_0$ and $N^{1/d}\in\mathbb{N}$. Consider the set of $N$ points $\mathcal{P}_N\subset\mathbb{T}^d$ to be the periodization of
    \begin{equation}\label{Lattice}
    \frac{[0,N^{1/d})^d\cap\mathbb{Z}^d}{N^{1/d}}\subset[0,1)^d.
    \end{equation}
    Hence, it is not difficult to see that
    \begin{equation*}
        \sum_{{p}\in\mathcal{P}_N}e^{2 \pi i {p}\cdot{n}}=\begin{cases}
            N &{\rm if}\quad{n}\in N^{1/d}\,\mathbb{Z}^d\\
            0 &{\rm else}
        \end{cases},
    \end{equation*}
    and therefore, by \eqref{Fourier Series} and the assumptions on $N$, we get
    \begin{equation*}
        \begin{split}
            \mathcal{D}_2(u;\mathcal{P}_N)&=\sum_{{m}\in\mathbb{Z}_*^d}N^2|N^{1/d}\,{m}|^{-2d-1}\int_{|\xi|\leq N^{1/d}\,|{m}|}|\xi|^{d+1}|\hat{u}|^2(\xi)\de\xi\\
        &\leq \frac{1}{d\pi^2}\, \mathcal{J}(u)\,N^2 \sum_{{n}\in\mathbb{Z}_*^d}|N^{1/d}\,{m}|^{-d-1}\\
        &\leq c_1 \,  \mathcal{J}(u)\,N^{(d-1)/d}.
        \end{split}
    \end{equation*}\par

    Now, we prove the upper bound for every positive integer $N$. As $k$ goes through the positive integers, we consider the recursive decomposition of $N$ defined as
    \begin{equation*}
    N_k=\max\left\{n\in\mathbb{N}\,\colon\, n^{1/d}\in\mathbb{N},\: n\leq N-\sum_{j=1}^{k-1}N_j\right\},
    \end{equation*}
    where improper sums are zeros. By the latter definition, we get that
    \begin{equation*}
    N-\sum_{j=1}^{k-1}N_j \leq(N_k^{1/d}+1)^d\leq N_k+c_2N_k^{(d-1)/d},
    \end{equation*}
    and therefore, for every integer $k\geq1$, it holds
    \begin{equation}\label{Relazione}
        N_{k+1}\leq N-\sum_{j=1}^{k}N_j\leq c_2N_k^{(d-1)/d}.
    \end{equation}
    Hence, since by definition $N_1\leq N$, then we recursively get
    \begin{equation}\label{Recursione}
        N_{k+1}^{1/d}\leq c_2^{1/d}N_k^{(d-1)/d^2}\leq c_2^{(k-1)/d} N^{(d-1)^{k-1}/{d^k}}.
    \end{equation}
    Now, set $K=K(d)$ to be the smallest integer such that
    \begin{equation*}
        \frac{(d-1)^{K}}{d^{K}}\leq\frac{d-1}{4d},
    \end{equation*}
    and in particular, notice that by \eqref{Recursione}, we have
    \begin{equation}\label{Utile}
        N_{K+1}^{(d-1)/d}\leq c_3\,N^{(d-1)/(4d)}.
    \end{equation}
    Now, consider the remainder
    \begin{equation*}
        N_0=N-\sum_{j=1}^{K+1}N_j,
    \end{equation*}
    and by \eqref{Relazione} and \eqref{Utile}, it is immediate to see that
    \begin{equation}\label{Remainder}
        N_0\leq c_2\,c_3\,N^{(d-1)/(4d)}.
    \end{equation}
    Last, consider the constant depending on $u$ defined as
    \begin{equation*}
    {\rm M}(u)=\max_{|{n}|\geq 1}|{n}|^{-d} \int_{B_{|{n}|}}|\xi|^{d+1}|\hat{u}|^2(\xi)\de\xi,
    \end{equation*}
    and in particular, notice that this quantity is indeed finite. Finally, for $1\leq j \leq K+1$, we assign a uniform lattice $\mathcal{P}_{N_j}$ to every $N_j$ as in \eqref{Lattice}, and by proceeding as in the first part of the proof, we get the estimate
    \begin{equation}\label{Stima 1}
    \mathcal{D}_2(u;\mathcal{P}_{N_j})\leq \min\left(c_1 \mathcal{J}(u) N_j^{(d-1)/d},\,c_4 M(u) n_0^{d-1}\right),
    \end{equation}
    where the minimum involved depends on whether or not it holds $N_j^{1/d}\geq n_0$. On the other hand, for every set of $N_0$ points $\mathcal{P}_{N_0}\subset\mathbb{T}^d$, by the inequality in \eqref{Remainder}, we get
    \begin{equation}\label{Stima 2}
\begin{split}
    \mathcal{D}_2(u;\mathcal{P}_{N_0})&= \sum_{{n}\in\mathbb{Z}_*^d}\left|\sum_{{p}\in\mathcal{P}_{N_0}}e^{2 \pi i {p}\cdot{n}}\right|^2|{n}|^{-2d-1}\int_{B_{|{n}|}}|\xi|^{d+1}|\hat{u}|^2(\xi)\de\xi\\
    &\leq c_5 {\rm M}(u) N^{(d-1)/(2d)}.
\end{split}
    \end{equation}
    Therefore, by \eqref{Stima 1} and \eqref{Stima 2}, we get
    \begin{equation*}
    \begin{split}
        \mathcal{D}_2(u;\mathcal{P}_N)&= \sum_{{n}\in\mathbb{Z}_*^d}\left|\sum_{j=0}^{K+1}\sum_{{p}\in\mathcal{P}_{N_j}}e^{2 \pi i {p}\cdot{n}}\right|^2|{n}|^{-2d-1}\int_{B_{|{n}|}}|\xi|^{d+1}|\hat{u}|^2(\xi)\de\xi\\
        &\leq(K+2)\sum_{j=0}^{K+1}\sum_{{n}\in\mathbb{Z}_*^d}\left|\sum_{{p}\in\mathcal{P}_{N_j}}e^{2 \pi i {p}\cdot{n}}\right|^2|{n}|^{-2d-1}\int_{B_{|{n}|}}|\xi|^{d+1}|\hat{u}|^2(\xi)\de\xi\\
        &\leq (K+2){\rm M}(u)\left(c_4(K+1)n_0^{d-1}+c_5N^{(d-1)/(2d)}\right)+ c_1(K+2)\mathcal{J}(u)\sum_{j=1}^{K+1}N_j^{(d-1)/d}\\
        &\leq c_5\,{\rm M}(u)\,N^{(d-1)/(2d)}+ C_d\, \mathcal{J}(u)\,N^{(d-1)/d},
    \end{split}
    \end{equation*}
    so that, dividing by $N^{(d-1)/d}$ and by taking the $\limsup$ as $N\to+\infty$, the conclusion follows.
\end{proof}

Before proceeding with the proof of the lower bound, we need a technical lemma. We briefly present a result of Cassels~\cite{Cas56} and Montgomery~\cite{Mon94}. The following proof is inspired by Siegel's analytic proof of Minkowski's convex body theorem.
In particular, variations of such an argument proved suitable for tackling various questions of quadratic discrepancy, and we refer to \cite{BT22} and \cite{Ber24} for some recent developments. Moreover, we mention that an analogous argument holds on manifolds, as the interested reader may verify in \cite{BGG21}.

\begin{lemma}[Cassels-Montgomery]
	For every origin-symmetric convex body $\Omega\subset\mathbb{R}^d$, and for every finite set of points $\{{p}_j\}_{j=1}^N\subset\mathbb{T}^d$, it holds
	\begin{equation*}
	\sum_{{m}\in\Omega\cap\mathbb{Z}_*^d}\left|\sum_{j=1}^{N}e^{2\pi i {m}\cdot {p}_j}\right|^2\geq2^{-d}|\Omega|N- N^2.
	\end{equation*}
\end{lemma}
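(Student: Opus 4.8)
The plan is to run Siegel's analytic argument for Minkowski's theorem in the "thickened" form suited to exponential sums. First I would pass from the convex body $\Omega$ to an auxiliary function that reproduces $\mathds{1}_\Omega$ under Fourier inversion on the lattice. The natural choice is to write $\Omega = 2\Omega'$ where $\Omega'$ is itself origin-symmetric and convex, and consider the $L^2$ function $f = \mathds{1}_{\Omega'} / |\Omega'|^{1/2}$, normalized so that $\|f\|_{L^2(\mathbb{R}^d)}^2 = 1$. The key algebraic fact is that the autocorrelation $f \ast \tilde f$ (with $\tilde f(x) = \overline{f(-x)}$) is supported in $\Omega' - \Omega' = \Omega$ by convexity and symmetry, and its Fourier transform is $|\hat f|^2 \geq 0$. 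This nonnegativity of $|\hat f|^2$ is what makes the Poisson-summation step go through as an inequality in the right direction.

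Next I would apply the Poisson summation formula to the function $g(x) = \sum_{j,k=1}^N (f\ast\tilde f)(x + p_j - p_k)$ — or more transparently, expand the square $\bigl|\sum_j \hat f(m) e^{2\pi i m\cdot p_j}\bigr|^2$ and sum over $m \in \mathbb{Z}^d$. Concretely: set $S(m) = \sum_{j=1}^N e^{2\pi i m \cdot p_j}$ and consider $\sum_{m\in\mathbb{Z}^d} |S(m)|^2 |\hat f(m)|^2$. On one hand, since $|\hat f(m)|^2 \geq 0$ for every $m$ and $|\hat f(m)|^2 \leq \|f\|_{L^1}^2 = |\Omega'|$ with equality-type control only... actually the cleaner route is: because $\widehat{f\ast\tilde f} = |\hat f|^2$ and $f\ast\tilde f$ is supported in $\Omega$, Poisson summation gives
\begin{equation*}
\sum_{m\in\mathbb{Z}^d} |S(m)|^2 |\hat f(m)|^2 = \sum_{\ell \in \mathbb{Z}^d}\ \sum_{j,k=1}^N (f\ast\tilde f)(p_j - p_k + \ell),
\end{equation*}
and the right-hand side is $\geq$ the $\ell = 0$, $j = k$ diagonal term $N\,(f\ast\tilde f)(0) = N\,\|f\|_{L^2}^2 = N$, using that all summands are real and — after replacing $f$ by a suitable symmetric choice — nonnegative (one takes $f$ real and even, e.g. $f = \mathds{1}_{\Omega'}$ up to normalization, so $f\ast\tilde f = f\ast f \geq 0$ pointwise). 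On the other hand, the $m = 0$ term contributes exactly $N^2 |\hat f(0)|^2 / \|f\|_{L^2}^2$-type quantity; with the normalization $f = \mathds{1}_{\Omega'}$ one has $|\hat f(0)|^2 = |\Omega'|^2$ and $|\hat f(m)|^2 \leq |\Omega'|^2$ for all $m$, so $\sum_{m\neq 0} |S(m)|^2|\hat f(m)|^2 \leq |\Omega'|^2 \sum_{m\neq 0}|S(m)|^2$. Combining, and dividing by $|\Omega'|^2$, yields $\sum_{m\in\Omega\cap\mathbb{Z}_*^d}|S(m)|^2 \geq (\text{diagonal})/|\Omega'|^2 - N^2$; tracking that the diagonal term equals $N|\Omega'|$ and that $|\Omega'| = 2^{-d}|\Omega|$ gives exactly $2^{-d}|\Omega|N - N^2$. (The restriction of the sum to $m \in \Omega$ is legitimate because $|\hat f(m)|^2 = |\widehat{\mathds{1}_{\Omega'}}(m)|^2$ need not vanish outside $\Omega$ — so one must instead bound $|\hat f(m)|^2 \le |\Omega'|^2$ crudely everywhere and keep the full sum, then note only the main term needs the finer information; I would arrange the inequalities so that no spurious positive terms are discarded.)

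The main obstacle, and the point requiring care, is the support/positivity bookkeeping: one needs a single function $f$ that is simultaneously (i) real-valued and even so that $f \ast \tilde f = f \ast f$ is pointwise nonnegative (guaranteeing the lower bound on the diagonal-containing side of Poisson summation), (ii) has autocorrelation supported in $\Omega$ (forcing $\Omega' = \tfrac12\Omega$ and using convexity + symmetry crucially), and (iii) has a Fourier transform whose modulus squared is controlled by its value at $0$. Taking $f = \mathds{1}_{\Omega'}$ with $\Omega' = \tfrac12\Omega$ satisfies all three, the last because $|\hat f(m)| = |\int_{\Omega'} e^{-2\pi i m\cdot x}dx| \leq |\Omega'| = \hat f(0)$. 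Once this function is fixed, the proof is just Poisson summation plus the trivial bound on the non-main frequencies, and the constant $2^{-d}$ emerges from $|\Omega'| = 2^{-d}|\Omega|$. I would also remark that the hypothesis that $\Omega$ be a \emph{body} (bounded, with nonempty interior) is what makes $f \in L^1 \cap L^2$ and all sums absolutely convergent, so Poisson summation applies without regularization.
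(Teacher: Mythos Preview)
Your approach has a genuine gap at the final step. With $f = \mathds{1}_{\Omega'}$ and $g = f * \tilde f$, the compact support of $g$ lies on the \emph{spatial} side; the Fourier weights $|\hat f(m)|^2$ are not supported in $\Omega$, so after Poisson summation the sum $\sum_{m\in\mathbb{Z}^d}|\hat f(m)|^2|S(m)|^2$ cannot be restricted to $m\in\Omega$ by discarding nonnegative terms. You acknowledge this and propose to bound $|\hat f(m)|^2 \le |\Omega'|^2$ everywhere, but then the arithmetic does not deliver what you claim: from $\sum_m |\hat f(m)|^2|S(m)|^2 \ge N|\Omega'|$ and $|\hat f(m)|^2 \le |\Omega'|^2$ one obtains only
\[
\sum_{m\ne 0}|S(m)|^2 \ \ge\ \frac{N}{|\Omega'|} - N^2 \ =\ \frac{2^d N}{|\Omega|} - N^2,
\]
which for large $|\Omega|$ is far weaker than the stated $2^{-d}|\Omega|N - N^2$ (and still ranges over all of $\mathbb{Z}_*^d$, not $\Omega\cap\mathbb{Z}_*^d$). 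The controlling ratio in this scheme is $g(0)/\sup_m|\hat g(m)| = |\Omega'|/|\Omega'|^2 = 1/|\Omega'|$, which goes the wrong way as $\Omega$ grows.

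The paper repairs exactly this by discretising the test function. One first picks, by averaging over translates, a point $\tilde x$ with $\#A \ge 2^{-d}|\Omega|$ where $A = (\tilde x + \tfrac12\Omega)\cap\mathbb{Z}^d$, and then forms the Fej\'er-type trigonometric polynomial $T(y) = (\#A)^{-1}\bigl|\sum_{n\in A} e^{2\pi i n\cdot y}\bigr|^2$. Now the Fourier coefficients $\hat T$ are genuinely supported in $A - A \subset \Omega$, satisfy $0\le \hat T(m) \le \hat T(0) = 1$, and $T(0) = \#A \ge 2^{-d}|\Omega|$. The same positivity chain then gives
\[
\sum_{m\in\Omega\cap\mathbb{Z}^d}|S(m)|^2 \ \ge\ \sum_{m} \hat T(m)\,|S(m)|^2 \ =\ \sum_{j,\ell} T(p_j-p_\ell) \ \ge\ N\,T(0) \ \ge\ 2^{-d}|\Omega|\,N,
\]
with both the correct constant and the correct restriction to $\Omega$. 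Replacing the continuous indicator by this lattice analogue is the missing idea; the Siegel-style continuous autocorrelation alone does not produce the stated bound.
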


\begin{proof}
	Consider the auxiliary sets $A_\Omega({x})\subset\mathbb{Z}^d$ defined by
	\begin{equation*}
	A_\Omega({x})=\left({x}+\Omega/2\right)\cap \mathbb{Z}^d.
	\end{equation*}
	Notice that
	\begin{equation*}
	\int_{\mathbb{T}^d}\#A_\Omega({x})\de{x}=\int_{\mathbb{T}^d}\sum_{{n}\in\mathbb{Z}^d}\mathds{1}_{\Omega/2}({n}-{x})\de{x}=\int_{\mathbb{R}^d}\mathds{1}_{\Omega/2}({x})\de{x}=2^{-d}|\Omega|,
	\end{equation*}
	and therefore, we find a point
 \begin{equation*}
     \tilde{{x}}\in[0,1)^d\quad\text{such that}\quad\#A_\Omega(\tilde{{x}})\geq2^{-d}|\Omega|. 
 \end{equation*}
 Now, consider the non-negative trigonometric polynomial
	\begin{equation*}
	T({y})=\frac{1}{\#A_\Omega(\tilde{{x}})}\left|\sum_{{n}\in A_\Omega(\tilde{{x}})}e^{2\pi i {n}\cdot {y}}\right|^2=\frac{1}{\#A_\Omega(\tilde{{x}})}\sum_{{n},{m}\in A_\Omega(\tilde{{x}})}e^{2\pi i({n}-{m})\cdot {y}},
	\end{equation*}
	and notice that the function of its Fourier coefficients $\widehat T\colon\mathbb{Z}^d\to\mathbb{R}$ is non-negative as well and, since ${n},{m}\in A_\Omega(\tilde{{x}})$ imply $({n}-{m})\in\Omega$, then its support is contained in $\Omega$. Further, observe that we have
	\begin{equation*}
	T(0)=\#A_\Omega(\tilde{{x}})\geq 2^{-d}|\Omega|,
	\end{equation*}
	moreover, it is not difficult to notice that for all ${n}\in\mathbb{Z}^d$, it holds
	\begin{equation*}
	0\leq\widehat{T}({n})\leq\widehat{T}(\mathbf{0})=\int_{\mathbb{T}^d}T({x})\de x=1.
	\end{equation*}
	Therefore, it follows that
	\begin{equation*}
	\begin{split}
	\sum_{{n}\in\Omega\cap\mathbb{Z}^d}\left|\sum_{j=1}^{N}e^{2\pi i {n}\cdot {p}_j}\right|^2&\geq \sum_{{n}\in\Omega\cap\mathbb{Z}^d}\widehat{T}({n})\left|\sum_{j=1}^{N}e^{2\pi i {n}\cdot {p}_j}\right|^2\\
	&=\sum_{j=1}^{N}\sum_{\ell=1}^{N}\sum_{{n}\in\Omega\cap\mathbb{Z}^d}\widehat{T}({n})\,e^{2\pi i {n}\cdot({p}_j-{p}_\ell)}\\
	&=\sum_{j=1}^{N}\sum_{\ell=1}^{N}T({p}_j-{p}_\ell)\geq N2^{-d}|\Omega|,
	\end{split}
	\end{equation*}
 and the conclusion easily follows
\end{proof}

Finally, we use the previous lemma to prove the lower bound in Theorem~\ref{Lower}.

\begin{proof}[Proof of Theorem~\ref{Lower}]
We write $c_i=c_i(d)$ with $i\in\mathbb{N}$ to denote generic dimensional constants throughout the proof. By \eqref{Fundamental Asymptotic}, there exists $n_0\in\mathbb{N}$ be such that for every ${n}\geq n_0$, it holds
\begin{equation*}
    \int_{B_{|{n}|}}|\xi|^{d+1}|\hat{u}|^2(\xi)\de\xi\geq\frac{{|{n}|^{d}\, \mathcal{J}}(u)}{4d\pi^2}.
\end{equation*}
Hence, consider
\begin{equation*}
    {\rm m}(u)=\min_{1\leq|{n}|\leq n_0} |{n}|^{-2d-1} \int_{B_{|{n}|}}|\xi|^{d+1}|\hat{u}|^2(\xi)\de\xi.
\end{equation*}
First, we show that the claim holds for all $u$ such that ${\rm m}(u)>0$, and in particular, we notice that ${\rm m}(u)=0$ if and only if $\hat{u}$ is zero in $B_1$. We prove the lower bound for all the positive integers $N$ such that
\begin{equation*}
    \frac{ \mathcal{J}(u)}{4d\pi^2}N^{-(d+1)/d}\leq{\rm m}(u),
\end{equation*}
and for every set of $N$ points $\mathcal{P}_N\subset\mathbb{T}^d$. Indeed, within this assumption, for all $n\in B_{N^{1/d}}$ it holds
\begin{equation*}
    |{n}|^{-2d-1} \int_{B_{|{n}|}}|\xi|^{d+1}|\hat{u}|^2(\xi)\de\xi\geq \frac{ \mathcal{J}(u)}{4d\pi^2}N^{-(d+1)/d}.
\end{equation*}
Therefore, by applying the latter inequality in \eqref{Fourier Series}, we get that for a dimensional constant $\kappa\geq 1$ to be chosen later, it holds
\begin{equation*}
    \mathcal{D}_2(u;\mathcal{P}_N)\geq \sum_{{n}\in\mathbb{Z}_*^d\cap B_{(\kappa N)^{1/d}}}\left|\sum_{{p}\in\mathcal{P}_N}e^{2 \pi i {p}\cdot{n}}\right|^2 \frac{ \mathcal{J}(u)}{4 d \pi^2}(\kappa N)^{-(d+1)/d}.
\end{equation*}
Finally, we can apply the previous lemma and get
\begin{equation*}
    \mathcal{D}_2(u;\mathcal{P}_N)\geq c_1\, \mathcal{J}(u)\,(\kappa N)^{-(d+1)/d}\left( |B_1|\kappa N^2-N^2\right),
\end{equation*}
and by setting $\kappa=2|B_1|^{-1}$, it follows that
\begin{equation*}
   \mathcal{D}_2(u;\mathcal{P}_N) \geq c_2 \, \mathcal{J}(u)\,N^{(d-1)/d}.
\end{equation*}

Now, by contradiction, suppose there exists $u\in{\rm BV}(\mathbb{R}^d)\cap L^\infty(\R^d)$ be such that $\hat{u}$ is zero in $B_1$, and such that there exists an increasing sequence of integers $\{N_j\}_{j\in\mathbb{N}}$, with corresponding sets of $N_j$ points $\mathcal{P}_{N_j}\subset\mathbb{T}^d$, such that
\begin{equation}\label{Contraddizione}
    \mathcal{D}_2(u;\mathcal{P}_{N_j})\leq \frac{c_2}{4}\, \mathcal{J}(u)\,N_j^{(d-1)/d}.
\end{equation}
Now, notice that for a continuous function $v\in{\rm BV}(\mathbb{R}^d)\cap L^\infty(\R^d)$ such that $\hat{v}(\mathbf{0})>0$, the claim holds with $\mathcal{J}(v)=0$. Similarly, the claim would also hold for $u+v$ since
\begin{equation*}
    \mathcal{F}\{u+v\}(\mathbf{0})=\hat{v}(\mathbf{0})>0,
\end{equation*}
and therefore, since $\mathcal{J}(u+v)=\mathcal{J}(u)$, we would get
\begin{equation*}
    \liminf_{N\to+\infty} N^{(1-d)/d}\inf_{\mathcal{P}_N\subset\mathbb{T}^d}\mathcal{D}_2(u+v;\mathcal{P}_N)\geq c_2\,  \mathcal{J}(u).
\end{equation*}
Last, we notice that by the triangular inequality and by \eqref{Contraddizione}, for every $j\in\mathbb{N}$ it holds
\begin{equation*}
\begin{split}
    \mathcal{D}_2(u+v;\mathcal{P}_{N_j})&\leq 2\mathcal{D}_2(u;\mathcal{P}_{N_j}) + 2\mathcal{D}_2(v;\mathcal{P}_{N_j})\\
    &\leq\frac{c_2}{2}\, \mathcal{J}(u)\,N_j^{(d-1)/d}+o\big(N_j^{(d-1)/d}\big),
    \end{split}
\end{equation*}
and this is a contradiction. Therefore, the claim holds with $C_d=c_2/4$.
\end{proof}

\section{Appendix}\label{sec:Appendix}
For simplicity, in what follows we denote $\|\cdot\|_{L^p}$ with $\|\cdot\|_p$.
\begin{lemma}
    There exists a positive absolute constant $C$ such that, for every $u\in BV(\R^d)$ and for every $v\in\mathbb{S}^{d-1}$, it holds
    \begin{equation}
        \label{Boundedness of directional translation}
        \frac{1}{R}\int_{B_R}|\langle\xi,v\rangle|^2|\hat{u}|^2(\xi)d\xi\leq C \|u\|_{\infty}|D_vu|(\R^d)\quad\forall R>0.
    \end{equation}
    In particular, by integrating over $\mathbb{S}^{d-1}$, we get 
    \begin{equation}
        \label{Boundedness of translation}
        \frac{1}{R}\int_{B_R}|\xi|^2|\hat{u}|^2(\xi)d\xi\leq C \|u\|_{\infty}|Du|(\R^d)\quad\forall R>0.
    \end{equation}
\end{lemma}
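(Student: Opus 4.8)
The plan is to reduce the claimed inequality to the heat-content estimate that underlies Theorem~\ref{Supreme main theorem}, thereby avoiding any Tauberian passage. The key observation is that the left-hand side of \eqref{Boundedness of directional translation} is, up to normalization, a weighted average of $|\widehat{D_v u}|^2$, and by Plancherel this is controlled by the decay of the directional heat content. Concretely, I would first write $|\langle\xi,v\rangle|^2|\hat u|^2(\xi)=\tfrac{1}{4\pi^2}|\mathcal F\{D_v u\}(\xi)|^2$, where $D_v u=\langle Du,v\rangle\in\mathcal M(\R^d)$. Then, rather than integrating over $B_R$ directly, I would exploit the elementary bound $\mathds 1_{B_R}(\xi)\le e\cdot e^{-|\xi|^2/R^2}$ valid for all $\xi$, which lets me replace the sharp cutoff by a Gaussian weight at the cost of an absolute constant. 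This gives
\begin{equation*}
\frac{1}{R}\int_{B_R}|\langle\xi,v\rangle|^2|\hat u|^2(\xi)\de\xi\le \frac{e}{4\pi^2 R}\int_{\R^d}e^{-|\xi|^2/R^2}|\mathcal F\{D_v u\}(\xi)|^2\de\xi .
\end{equation*}

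Next I would identify the right-hand side with a directional heat-content quantity. Setting $t=1/(2\pi R)$ (so that $4\pi^2 t^2=1/R^2$) and using Plancherel together with the representation of the heat semigroup in frequency, the integral $\int e^{-4\pi^2|\xi|^2 t^2}|\mathcal F\{D_v u\}|^2\de\xi$ equals $\langle D_v u * G_{t^2}, D_v u\rangle$ in the appropriate distributional pairing; integrating by parts twice moves both derivatives onto the Gaussian and one is left with an expression of the form $-\tfrac{1}{t}\langle u-T(t^2_*)u,\,\text{something}\rangle$ type, or more cleanly, with $\int |\nabla_v (T(s)u)|\,d|D_v u|$-type bounds. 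The cleanest route is: bound $|\mathcal F\{D_v u\}(\xi)|\le |D_v u|(\R^d)$ pointlessly loses the $R$-gain, so instead I pair one copy of $D_vu$ against the smoothed one, write $\int e^{-4\pi^2|\xi|^2t^2}|\mathcal F\{D_vu\}|^2 = \int_{\R^d} (G_{t^2}*D_vu)\,dD_vu$, and estimate $\|G_{t^2}*D_vu\|_\infty$ after one integration by parts by $\|\partial_v G_{t^2}*u\|_\infty\le \|u\|_\infty\|\partial_v G_{t^2}\|_{L^1}=c\,\|u\|_\infty/\sqrt{t^2}=c\,\|u\|_\infty/t$. Hence
\begin{equation*}
\frac{1}{R}\int_{\R^d}e^{-4\pi^2|\xi|^2t^2}|\mathcal F\{D_vu\}|^2\de\xi\le \frac{1}{R}\cdot\frac{c\,\|u\|_\infty}{t}\,|D_vu|(\R^d)=2\pi c\,\|u\|_\infty\,|D_vu|(\R^d),
\end{equation*}
which is exactly a bound of the desired shape.

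Assembling these pieces yields \eqref{Boundedness of directional translation} with an absolute constant $C$. For \eqref{Boundedness of translation} I would integrate \eqref{Boundedness of directional translation} over $v\in\mathbb S^{d-1}$ and use $\int_{\mathbb S^{d-1}}|\langle\xi,v\rangle|^2\de v = c_d|\xi|^2$ on the left, while on the right $\int_{\mathbb S^{d-1}}|D_vu|(\R^d)\de v\le c_d'\,|Du|(\R^d)$ since $|D_vu|=|\langle Du,v\rangle|\le|Du|$ as measures; the dimensional constants cancel up to an absolute factor, giving \eqref{Boundedness of translation}.

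\textbf{Main obstacle.} The delicate point is the justification of the integration by parts $\int (G_{t^2}*D_vu)\,dD_vu=-\int(\partial_v G_{t^2}*u)\,dD_vu$ at the level of measures: one must check that $G_{t^2}*D_vu$ is precisely $\partial_v(G_{t^2}*u)=(\partial_v G_{t^2})*u$ in the sense of tempered distributions (true since $D_vu=\partial_v u$ distributionally and convolution commutes with derivatives), and that the resulting function is continuous and bounded so the pairing against the finite measure $D_vu$ is legitimate — this follows from $\partial_v G_{t^2}\in L^1\cap L^\infty$ and $u\in L^\infty$. The only genuinely quantitative input needed is $\|\partial_v G_{t^2}\|_{L^1(\R^d)}=C_0/t$ for an absolute $C_0$ (independent of dimension, by the product structure of the Gaussian and scaling), which is a direct computation. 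Everything else is bookkeeping of constants; no Tauberian theorem or structure theory of $\mathrm{BV}$ beyond $Du\in\mathcal M$ is required, which is why the bound holds for \emph{all} $R>0$ and not merely asymptotically.
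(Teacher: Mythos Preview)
Your argument for \eqref{Boundedness of directional translation} is correct but follows a genuinely different route from the paper's. The paper works directly with finite differences: for fixed $v\in\mathbb{S}^{d-1}$ and $h>0$, Plancherel gives
\[
\int_{\R^d}\frac{|u(x+hv)-u(x)|^2}{h}\de x=\frac{2}{h}\int_{\R^d}\big(1-\cos(2\pi h\langle\xi,v\rangle)\big)\,|\hat u|^2(\xi)\de\xi,
\]
the left side is bounded by $2\|u\|_\infty|D_vu|(\R^d)$ via the standard difference-quotient characterization of ${\rm BV}$, and the elementary inequality $1-\cos\theta\ge c\,\theta^2$ for $|\theta|\le 1$ extracts $|\langle\xi,v\rangle|^2$ on the ball $B_{1/(2\pi h)}$; setting $R\sim 1/h$ finishes. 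Your Gaussian-domination plus heat-kernel integration-by-parts argument (use $\mathds{1}_{B_R}\le e\cdot e^{-|\xi|^2/R^2}$, then $G_{t^2}*D_vu=(\partial_vG_{t^2})*u$ and $\|\partial_vG_{t^2}\|_{L^1}=c/t$ with $t=1/(2\pi R)$) is equally valid and ties in more visibly with the heat-content machinery of Section~\ref{Fourier Symp}; the paper's route is shorter and avoids the measure-pairing and distributional justifications you (correctly) flag as the main technical point.

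For \eqref{Boundedness of translation} both you and the paper simply integrate \eqref{Boundedness of directional translation} over $v\in\mathbb{S}^{d-1}$. One small slip in your write-up: the claim that ``the dimensional constants cancel up to an absolute factor'' is not correct --- the ratio $\big(d\int_{\mathbb{S}^{d-1}}|v_1|\de\sigma\big)\big/|\mathbb{S}^{d-1}|$ that appears grows like $\sqrt{d}$, so the constant produced in \eqref{Boundedness of translation} is genuinely dimension-dependent. The paper's own proof has exactly the same feature (despite the lemma's phrasing), and indeed the Remark and Proposition immediately following it switch to writing $C_d$; so this is a shared imprecision in the statement rather than a gap in your argument.
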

\begin{proof}
    First, we fix a vector $v\in\mathbb{S}^{d-1}$ and a $h>0$. By standard properties of the Fourier transform and by Plancherel theorem, we have
    \begin{equation*}
        \int_{\R^d}\frac{|u(x+hv)-u(x)|^2}{h}\de x=\frac{2}{h}\int_{\R^d}(1-\cos(2\pi h\langle\xi, v\rangle))|\hat{u}|^2(\xi)d\xi.
    \end{equation*}
    The left-hand side is bounded from above by $2\|u\|_{\infty}|D_vu|(\R^d)$, where $|D_vu|$ stands for the directional total variation. Now, we exploit the fact that if $\xi\in B_{1/(2\pi h)}$, then there exists a positive absolute constant $c$ such that 
    \begin{equation*}
        ch^2|\langle\xi, v\rangle|^2\leq2(1-\cos(2\pi h\langle\xi, v\rangle)).
    \end{equation*}
    Therefore, we get
    \begin{equation*}
        h\int_{B_{1/h}}|\langle\xi, v\rangle|^2|\hat{u}|^2(\xi)d\xi\leq C\|u\|_{\infty}|D_vu|(\R^d),
    \end{equation*}
    which is \eqref{Boundedness of directional translation}.
    Finally, by setting $R=1/h$ and by integrating over $\mathbb{S}^{d-1}$, we obtain \eqref{Boundedness of translation}.
    
\end{proof}
\begin{rem}
By integrating by parts, we can reformulate \eqref{Boundedness of translation} as
\begin{equation*}
    \int_{B_R^c}|\hat{u}|^2(\xi)d\xi\leq C_d\frac{\|u\|_{\infty}|Du|(\R^d)}{R}.
\end{equation*}
\end{rem}

Last, we proceed with showing an interesting inequality. In particular, we obtain a non-sharp isoperimetric inequality by substituting $u=\mathds{1}_\Omega$ in the following proposition.

\begin{prop}
    There exists a positive dimensional constant $C_d$ such that, for every $u\in{\rm BV}(\R^d)\cap L^\infty(\R^d)$, It holds
    \begin{equation*}
         \|u\|_{2}^2\leq C_d\|u\|_{1}^{\frac{2}{d+1}}\|u\|_{\infty}^{\frac{d}{d+1}}\bigg(|Du|(\R^d)\bigg)^\frac{d}{d+1}.
    \end{equation*}
\end{prop}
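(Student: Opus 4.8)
This is a Gagliardo--Nirenberg--type interpolation inequality, and the natural route is to pass to the Fourier side and split frequencies at a well-chosen radius, exactly as in the classical proof of such bounds. Since $u\in L^1(\R^d)\cap L^\infty(\R^d)$, interpolation gives $u\in L^2(\R^d)$, so Plancherel's theorem applies and $\|u\|_2^2=\|\hat u\|_2^2$. If $|Du|(\R^d)=0$ then $u$ is constant and, being integrable, $u\equiv 0$, so the inequality is trivial; likewise if $\|u\|_1=0$. Hence we may assume both quantities are strictly positive, which makes the optimization below meaningful.

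\textbf{Key steps.} First, for any $R>0$ I would write
\begin{equation*}
    \|u\|_2^2=\int_{B_R}|\hat u|^2(\xi)\de\xi+\int_{B_R^{\mathsf c}}|\hat u|^2(\xi)\de\xi.
\end{equation*}
For the low-frequency term I would use the elementary bound $\|\hat u\|_\infty\leq\|u\|_1$ together with $|B_R|=\omega_d R^d$ to get
\begin{equation*}
    \int_{B_R}|\hat u|^2(\xi)\de\xi\leq \omega_d\,R^d\,\|u\|_1^2.
\end{equation*}
For the high-frequency term I would invoke the reformulation of \eqref{Boundedness of translation} stated in the Remark immediately above, namely
\begin{equation*}
    \int_{B_R^{\mathsf c}}|\hat u|^2(\xi)\de\xi\leq C_d\,\frac{\|u\|_\infty\,|Du|(\R^d)}{R}.
\end{equation*}
Combining, $\|u\|_2^2\leq \omega_d R^d\|u\|_1^2+C_d R^{-1}\|u\|_\infty|Du|(\R^d)=:AR^d+BR^{-1}$.

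\textbf{Optimization.} The last step is to minimize $R\mapsto AR^d+BR^{-1}$ over $R>0$. Setting the derivative $dAR^{d-1}-BR^{-2}$ to zero gives the critical radius $R_*=(B/(dA))^{1/(d+1)}$, and substituting back yields $AR_*^d+BR_*^{-1}=(d^{-d/(d+1)}+d^{1/(d+1)})\,A^{1/(d+1)}B^{d/(d+1)}$. Plugging in $A=\omega_d\|u\|_1^2$ and $B=C_d\|u\|_\infty|Du|(\R^d)$ produces
\begin{equation*}
    \|u\|_2^2\leq C_d'\,\|u\|_1^{2/(d+1)}\,\|u\|_\infty^{d/(d+1)}\,\big(|Du|(\R^d)\big)^{d/(d+1)},
\end{equation*}
which is the claim. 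There is no serious obstacle here: the only points requiring a word of care are the justification that $u\in L^2$ (so that Plancherel and hence the cited Remark are applicable), the trivial treatment of the degenerate cases $\|u\|_1=0$ or $|Du|(\R^d)=0$, and the bookkeeping of the dimensional constant through the optimization. Substituting $u=\mathds 1_\Omega$ (so $\|u\|_1=|\Omega|$, $\|u\|_\infty=1$, $\|u\|_2^2=|\Omega|$, and $|Du|(\R^d)={\rm Per}(\Omega)$) then gives $|\Omega|^{d/(d+1)}\leq C_d\,{\rm Per}(\Omega)^{d/(d+1)}$, i.e. a non-sharp isoperimetric inequality, as announced.
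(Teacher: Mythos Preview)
Your proof is correct and follows essentially the same route as the paper: Plancherel splitting at radius $R$, the trivial $\|\hat u\|_\infty\le\|u\|_1$ bound on $B_R$, the high-frequency bound from the Remark after \eqref{Boundedness of translation} on $B_R^{\mathsf c}$, and optimization in $R$. Your additional remarks on the degenerate cases and on $u\in L^2$ are welcome but not strictly needed, since the inequality is homogeneous and the optimization is formal.
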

\begin{proof}
First, we use Plancherel theorem to write
\begin{equation*}
\int_{\R^d}|u|^2(x)dx=\int_{B_R}|\hat{u}|^2(\xi)d\xi+\int_{B_R^c}|\hat{u}|^2(\xi)d\xi.
\end{equation*}
By the fact that the Fourier transform is $1$-Lipschitz between $L^1$ and $L^\infty$, and by \eqref{Boundedness of translation}, we obtain
\begin{equation*}
    \|u\|_{2}^2\leq \omega_dR^d\|u\|_{1}^2+C\frac{\|u\|_{\infty}|Du|(\R^d)}{R}.
\end{equation*}
By optimizing over $R>0$, we get the optimal value
\begin{equation*}
    R = \bigg(\frac{C\|u\|_{\infty}|Du|(\R^d)}{d\omega_d\|u\|_{1}^2}\bigg)^\frac{1}{d+1}.
\end{equation*}
Therefore, it follows that
\begin{equation*}
     \|u\|_{2}^2\leq C_d\|u\|_{1}^{\frac{2}{d+1}}\|u\|_{\infty}^{\frac{d}{d+1}}\bigg(|Du|(\R^d)\bigg)^\frac{d}{d+1},
\end{equation*}
and this is the thesis.
\end{proof}

\newpage

\bibliography{references}
            \bibliographystyle{alpha}

\end{document}